\newtheorem{prethm}{{\bf Theorem}}
\newenvironment{theorem}{\begin{prethm}{\hspace{-0.5
em}{\bf.}}}{\end{prethm}}
\newtheorem{prelem}{{\bf Lemma}}
\newenvironment{Lemma}{\begin{prelem}{\hspace{-0.5
em}{\bf.}}}{\end{prelem}}
\newtheorem{cor}{{\bf Corollary}}
\newenvironment{Corollary}{\begin{cor}{\hspace{-0.5
em}{\bf.}}}{\end{cor}}
\newtheorem{prop}{{\bf Proposition}}
\newtheorem{preex}{{\bf Example}}
\newtheorem{presol}{{\bf Solution}}
\newtheorem{preproof}{{\bf Proof.}}
\newenvironment{proof}[1]{\begin{preproof}{\rm
               #1}\hfill{$\rule{2mm}{2mm}$}}{\end{preproof}}
\begin{document}

\date{}
\title{
{\Large{\bf Clique-coloring of $K_{3,3}$-minor free graphs }}}
{\small
\author{
{\sc  Behnaz Omoomi\footnote{bomoomi@cc.iut.ac.ir} } and {\sc Maryam Taleb  }\\
[1mm]
{\small \it  Department of Mathematical Sciences}\\
{\small \it  Isfahan University of Technology} \\
{\small \it 84156-83111, \ Isfahan, Iran}}




 \maketitle \baselineskip15truept

\begin{abstract}
A clique-coloring of a given graph $G$ is a coloring of the vertices of $G$ such that no maximal clique of size at least two is monocolored. The clique-chromatic number of $G$ is the least number of colors for which $G$ admits a clique-coloring. It has been proved  that every planar graph is $3$-clique  colorable and every claw-free planar graph, different from an odd cycle,  is $2$-clique colorable. In this paper, we generalize these results  to $K_{3,3}$-minor free ($K_{3,3}$-subdivision free) graphs. 
\end{abstract}

\noindent \textbf{Keywords:} Clique-coloring, Clique chromatic number, $K_{3,3}$-minor free graphs, Claw-free graphs.

\section{Introduction} 
Graphs considered in this paper are all simple and undirected. Let $G$ be a graph with vertex set $V(G)$ and edge set $E(G)$. The number of vertices of $G$ is called the order of $G$.  The set of vertices adjacent  to a vertex $v$ is denoted by $N_G(v)$, and the size of $N_G(v)$  is called the degree of $v$ and is denoted by  $d_G(v)$. A vetex with degree zero is called an isolated vertex. The maximum degree of $G$ is denoted by $\Delta(G)$. For a subset $S\subseteq V(G)$, the subgraph induced by $S$ is denoted by $G[S]$. An independent  set is a set of vertices in graph, that does not induce any edge and the size of  maximum independent  set in  $G$ is written  by $\alpha(G)$. 

As usual,  the complete bipartite graph with parts of cardinality $m$ and $n$ $(m, n\in \mathbf{N})$ is indicated by $K_{m,n}$.  The graph $K_{1,3}$ is  called a claw.
The complete graph with $n$ vertices $\{v_1, \dots, v_n\}$ is denoted by $K_n$ or $[v_1, \dots, v_n]$. The graph $\bar{G}$ is the complement of $G$ with the same vertex set as $G$, and $uv$ is an edge in $\bar{G}$ if and only if it is not an edge in $G$. The path and the cycle of order $n$ are denoted by $P_n$ and $C_n$, respectively. The lenght of a path and a cycle is the number of its edges. A path with end vertices $u$ and $v$ is denoted by $(u,v)$-path.

   Edge $e$ is called an edge cut in connected graph $G$ if  $G/\{e\}$ is disconnected. A block in $G$ is a maximal $2$-connected subgraph of $G$. A chord of a cycle $C$ is an edge not in $C$ whose end vertices lie in $C$. A hole is a chordless cycle of length greater than three. A hole is said to be odd if its length is odd, otherwise, it is said to be even.  Given a graph $F$, a graph $G$ is called $F$-free if $G$ does not contain any induced subgraph isomorphic with $F$. A graph $G$ is a $(F_1,\ldots, F_k)$-free graph if it is $F_i$-free  for all $i \in \lbrace 1,\ldots, k\rbrace $. A graph $G$ is claw-free (resp. triangle-free) if it does not contain $K_{1,3}$ (resp. $K_3$) as an induced subgraph.
     
  By  a subdivision of  an edge $e=uv$,  we mean replacing the edge $e$ with a $(u,v)$-path. 
  Any graph derived from  graph $F$ by a sequence of  subdivisions is called a subdivision of $F$ or an $F$-subdivision. 
The contraction of an edge $e$ with endpoints  $u$ and $v$ is the replacement of $u$ and $v$ with a  vertex such that edges incident to the new vertex are the edges that were incident with either  $u$ or $v$ except $e$;  the obtained graph is denoted by 
 $G \cdot e$. Graph $F$ is called a minor of $G$ ($G$ is called  $F$-minor graph) if $F$ can be obtained from $G$ by a sequence of vertex and edge deletions and edge contractions. Given a graph $F$,  graph $G$ is $F$-minor free if $F$ is not a minor of $G$. Obviously, any graph $G$ which contains an $F$-subdivision also has an $F$-minor. Thus an $F$-minor free graph is necessarily $F$-subdivision free, although in general the converse is not true. However, if $F$ is a graph of  the maximum degree at most three, any graph which has an $F$-minor also contains an $F$-subdivision. Thus,  a graph is $K_{3,3}$-minor free if and only if it is $K_{3,3}$-subdivision free. By the well-known Kuratowski's theorem a graph is planar if and only if it is $K_5$-minor free and $K_{3,3}$-minor free. 
  For further information on graph theory concepts and terminology we refer the reader to \cite{bondy}.
 
 A vertex $k$-coloring of $G$ is a function $c: V(G) \longrightarrow \lbrace 1, 2,\ldots, k \rbrace $ such that for every two adjacent vertices $u$ and $v$, $c(u)\neq c(v)$. The minimum integer $k$ for which $G$ has a vertex $k$-coloring is called the chromatic number of $G$ and is denoted by $\chi(G)$. 
 A hypergraph $\mathcal{H}$ is a pair $(V, \mathcal{E})$, where $V$ is the set of vertices of $\mathcal{H}$, and $\mathcal{E}$ is a family of non-empty subsets of $V$ called hyperedges of $\mathcal{H}$. A \textit{$k$-coloring} of $\mathcal{H}=(V, \mathcal{E})$ is a mapping $c : V \longrightarrow \lbrace 1, 2,\ldots, k \rbrace $ such that for all $e \in \mathcal{E}$,  where $\vert e \vert \geq 2$, there exist $u, v \in e$ with $c(u) \neq c(v)$. The \textit{chromatic number} of $\mathcal{H}$, $\chi(\mathcal{H})$,   is the smallest $k$ for which $\mathcal{H}$ has a $k$-coloring. Indeed, every graph is a hypergraph in which every hyperedge is of size two and a $k$-coloring of such hypergraph is a usual vertex $k$-coloring.

A clique of $G$ is a subset  of mutually adjacent vertices of $V(G)$. A clique is said to be maximal if it is not properly contained in any other clique of $G$. We call \textit{clique-hypergraph} of $G$,  the hypergraph $\mathcal{H}(G)=(V,\mathcal{E})$ with  the same vertices as $G$ whose  hyperedges are the maximal cliques of $G$ of cardinality at least two. A $k$-coloring of $\mathcal{H}(G)$ is also called a \textit{$k$-clique coloring} of $G$, and the chromatic number of $\mathcal{H}(G)$ is called the \textit{clique-chromatic number} of $G$, and is denoted by $\chi_c(G)$. In other words, a $k$-clique coloring of $G$ is a coloring of $V(G)$ such that no maximal clique in $G$ is monochromatic, and $\chi_{c}(G)= \chi(\mathcal{H}(G))$. A clique coloring of $\mathcal{H}(G)$ is \textit{strong} if no triangle of $G$ is monochromatic. 
A graph $G$ is \textit{hereditary $k$-clique colorable} if $G$ and all its induced subgraphs are $k$-clique colorable. The clique-hypergraph coloring problem was posed by Duffus et al. in~\cite{cid}. To see more results on this concept see~\cite{ci3},~\cite{bt},~\cite{cims},~\cite{cit},~\cite{wu}.

  Clearly, any vertex $k$-coloring of $G$ is a $k$-clique coloring, whence $\chi_c(G) \leq \chi(G)$.  It is shown that in general, clique coloring can be a very different problem from usual vertex coloring and $\chi_c(G)$ could be much smaller than $\chi(G)$~\cite{ci3}. On the other hand, if $G$ is triangle-free, then $\mathcal{H}(G) =G$, which implies $\chi_{c}(G) =\chi(G)$. Since the chromatic number of triangle-free graphs is known to be unbounded~\cite{cim}, we get that the same is true for the clique-chromatic number of triangle-free graphs.
 In addition, clique-chromatic number of claw-free graphs or even line graphs is not bounded.
 For instance for each constant $k$, there exists $N_{k} \in \mathbf{N}$ such that for each $n\geq N_k$, $\chi_c(L(K_n))\geq k+1$ that $L(K_n)$ is line graph of complete graph $K_n$ and is claw-free~\cite{ci3}.
  On the other hand, D{\'e}fossez   proved that a claw-free graph is hereditary $2$-clique colorable if and only if it is   odd-hole-free~\cite{cid9}.  
 That is why  recognizing the structure of graphs with bounded and unbounded clique-chromatic number could be an interesting problem.

 For planar graphs, Mohar and Skrekovski in~\cite{ci1} proved the following theorem.

\begin{theorem}~\rm\cite{ci1}\label{a1}
 Every planar graph is strongly $3$-clique colorable.
\end{theorem}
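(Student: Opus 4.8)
The plan is to prove this by induction on the number of vertices, i.e.\ by analysing a minimal counterexample $G$, in the style of Gr\"otzsch's theorem on $3$-colouring triangle-free planar graphs. First I would dispose of two standard structural reductions. If $G$ has a cut vertex, split it into its blocks, apply minimality to each, permute colours so that the colourings agree on the shared vertex, and glue; this is valid because every maximal clique of $G$ (being $2$-connected when it has at least three vertices, and a single edge otherwise) lies inside one block and is maximal there. Hence we may assume $G$ is $2$-connected. Next, if $G$ contains a separating triangle $T=\{x,y,z\}$, write $G=G_{1}\cup G_{2}$ with $G_{1}\cap G_{2}=T$ (and its three edges), strongly $3$-clique colour each part by minimality, permute the colours of $G_{2}$ so that the two colourings agree on $T$ (legitimate since $x,y,z$ receive three distinct colours on either side), and glue; one checks that every maximal clique and every triangle-free edge of $G$ lies entirely in $G_{1}$ or $G_{2}$. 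Thus we may assume $G$ has no separating triangle, so that every triangle of $G$ bounds a face.

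The heart of the induction is a local reduction. Call an edge \emph{triangular} if it lies in a triangle, and for a vertex $v$ let $A(v)$ be the set of neighbours $u$ with $vu$ not triangular --- equivalently, the neighbours $u$ for which $\{v,u\}$ is a maximal clique. I claim a vertex with $|A(v)|\le 2$ is reducible: delete $v$, strongly $3$-clique colour $G-v$ by minimality, and extend. The point is that a maximal clique of $G$ meeting $v$ and having at least three vertices contains an edge between two neighbours of $v$, which is already properly coloured, so such a clique is automatically non-monochromatic no matter how $v$ is coloured; the only genuine constraints are $c(v)\neq c(u)$ for the at most two vertices $u\in A(v)$, and these leave a colour free. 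Consequently, in $G$ every vertex has at least three incident triangle-free edges; in particular $\delta(G)\ge 3$, every vertex of degree at most $4$ lies in no triangle, and every vertex on a triangular face has degree at least $5$. Moreover the subgraph $H$ of triangle-free edges is itself triangle-free (three mutually adjacent triangle-free edges would form a triangle) and contains every vertex of degree at most $4$. If $G$ had no triangle then $G=H$ would be triangle-free planar, and Gr\"otzsch's theorem would give a proper $3$-colouring, which is in particular a strong $3$-clique colouring --- a contradiction. So $G$ has at least one triangular face.

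To finish I would run a discharging argument. Give each vertex $v$ charge $d(v)-4$ and each face $f$ charge $d(f)-4$; by Euler's formula the total is $-8$. By the facts just listed, the only negative charges sit on degree-$3$ vertices (each surrounded by faces of length at least $4$, since such vertices lie in no triangle) and on triangular faces (each incident with three vertices of degree at least $5$). One then designs rules sending charge from large faces and high-degree vertices to degree-$3$ vertices and to triangular faces, and shows that afterwards every charge is non-negative, contradicting the total $-8$. I expect this last step to be the main obstacle: making the rules balance requires a careful study of how degree-$3$ vertices and triangular faces cluster and share their neighbouring large faces, and will almost certainly need several further reducible configurations (for instance, controlling runs of consecutive degree-$3$ vertices and small separations). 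This is precisely where the genuine difficulty of Gr\"otzsch-type theorems reappears; a robust route would be to strengthen the induction hypothesis to allow a precoloured facial cycle, as in Thomassen's streamlined proof of Gr\"otzsch's theorem, and to carry that face precolouring through all of the reductions above.
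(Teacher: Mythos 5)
There is a genuine gap: your argument is not a proof but a program, and the part you defer is exactly where the whole difficulty lives. You say so yourself --- the discharging rules are never designed, no unavoidable set of reducible configurations is exhibited, and you concede that ``several further reducible configurations'' would ``almost certainly'' be needed. Nothing before that point is strong enough to force a contradiction on its own, so the proof does not close. Moreover, two of the reductions you do carry out are flawed. First, the separating-triangle gluing assumes that $x,y,z$ ``receive three distinct colours on either side''; a strong $3$-clique colouring only forbids a \emph{monochromatic} triangle, so one side may colour $T$ with pattern $1,1,2$ and the other with $1,2,2$, and no colour permutation reconciles these. Second, and more seriously, the reducibility of a vertex $v$ with $|A(v)|\le 2$ is false as argued. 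For a \emph{strong} clique colouring you must make every triangle through $v$ non-monochromatic, not just every maximal clique; and your claim that an edge between two neighbours of $v$ inside a maximal clique of size at least three is ``already properly coloured'' in $G-v$ has no justification --- a clique colouring happily gives both ends of such an edge the same colour when that edge sits inside a larger clique or triangle of $G-v$. So $v$ can lie in three triangles $vx_iy_i$ with $c(x_i)=c(y_i)=i$ for $i=1,2,3$, plus constraints from $A(v)$, and then no colour is available for $v$. The set of forbidden colours at $v$ is governed by the monochromatic edges among $N(v)$, not by $|A(v)|$.

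Your closing remark points at the correct repair: the actual proof of this theorem (Mohar and \v{S}krekovski) runs the induction with a precoloured outer facial triangle, i.e.\ it proves the extension statement recorded in this paper as Lemma~\ref{l11}, in the spirit of Thomassen's proof of Gr\"otzsch's theorem. That stronger hypothesis is what makes the separating-triangle step sound (colour $G_1$, then extend the induced colouring of $T$ into $G_2$) and is what controls the local extension problems that defeat your $|A(v)|\le 2$ reduction. But identifying the right induction hypothesis is not the same as executing it; as written, the proposal establishes only some preliminary structure of a minimal counterexample and leaves the theorem unproved.
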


 Moreover, Shan et al. in~\cite{ci2} proved the following theorem.

 \begin{theorem}~\rm\cite{ci2}\label{a2}
Every claw-free planar graph, different from an odd cycle, is $2$-clique colorable.
\end{theorem}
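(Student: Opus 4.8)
The plan is, after treating connected components one at a time, to show that a connected claw-free planar graph $G$ which is not an odd cycle of length at least $5$ is $2$-clique colorable. Throughout I will work with the spanning subgraph $G_1\subseteq G$ whose edges are exactly those of $G$ lying in no triangle: these edges are precisely the maximal cliques of size $2$, so a $2$-coloring of $V(G)$ is a clique coloring if and only if it is proper on $G_1$ and leaves no maximal clique of size $\ge 3$ monochromatic.

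First I would reduce to $2$-connected graphs. Since a clique is either a single edge or a $2$-connected graph, every maximal clique lies inside one block of $G$; hence a $2$-coloring is a clique coloring of $G$ exactly when it restricts to one on each block. Conversely, clique colorings of the individual blocks can be pasted together by rooting the block--cut tree and, proceeding from the root downward, complementing all colors of a block whenever its cut vertex to the parent has received inconsistent colors (complementing a block's colors preserves the clique-coloring property). It therefore suffices to clique color each block; those that are $K_2$, a triangle, or an even cycle are $2$-clique colorable for trivial reasons, while a cycle of odd length $\ge 5$ can be a block of a connected claw-free graph only if it is the whole graph, since otherwise the cut vertex joining it to the rest would be the center of an induced claw. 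So I may assume $G$ is $2$-connected and not a cycle, in particular $\Delta(G)\ge 3$.

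Next I would prove that $G_1$ is a forest. Claw-freeness gives $\Delta(G_1)\le 2$ at once (three pairwise non-adjacent $G_1$-neighbors of a vertex would form a claw), and a short argument shows that a vertex lying in a triangle of $G$ has at most one neighbor in $G_1$. Consequently no vertex on a cycle of $G_1$ lies in any triangle of $G$, and then claw-freeness rules out both chords and outside neighbors of such a cycle, forcing it to be an entire component of $G$ --- impossible since $G$ is $2$-connected and not a cycle. Hence $G_1$ is a forest, in particular bipartite, and the remaining goal is to find a proper $2$-coloring of this forest that is moreover non-monochromatic on every triangle of $G$ (which then automatically handles maximal cliques of size $3$ and $4$, the only sizes available in a planar graph).

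Finally, to produce the coloring I would invoke the Four Color Theorem. Fix a proper $4$-coloring $c_4$ of $G$ and look for a partition $\{1,2,3,4\}=P\cup\bar P$ into two pairs such that the two ends of every $G_1$-edge receive $c_4$-colors in different parts; then coloring a vertex $v$ according to whether $c_4(v)\in P$ is a clique coloring, since any clique of size $\ge 3$ uses at least three distinct $c_4$-colors and hence meets both parts, while the $G_1$-edges are handled by the choice of $P$. Such a $P$ exists precisely when the ``palette graph'' on $\{1,2,3,4\}$ --- with an edge between two colors whenever $c_4$ realizes that pair on some $G_1$-edge --- is bipartite, equivalently triangle-free. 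The crux, and the step I expect to be the main obstacle, is to show that, because $G_1$ is a forest, $c_4$ can always be chosen (or locally recolored along the trees of $G_1$) so that this palette graph has no triangle; I would try to argue this component by component along the trees of $G_1$, using that each vertex meets at most one $G_1$-edge of any triangle through it. An alternative to this last step is a direct induction on $|V(G)|$, peeling off a low-degree vertex $v$ (planarity provides one of degree at most $5$, and a minimal counterexample has $\delta(G)\ge 2$) by deleting it when its neighborhood induces an edge or when it is a cut vertex, and otherwise deleting $v$ and identifying two non-adjacent neighbors of it --- a path contraction, which keeps the graph planar and, since claw-freeness forces $N(v)$ to be a union of two cliques at such a vertex, keeps it claw-free --- then coloring the smaller graph by induction and checking the maximal cliques near the modification. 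In either route the delicate point is the same: after the ``light'' edges of $G_1$ have been taken care of, one must control how the triangles of $G$ constrain the $2$-coloring, and that is exactly where planarity (through the Four Color Theorem, or through a low-degree vertex) has to be played off against the local clique structure of claw-free graphs.
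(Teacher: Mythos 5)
This statement is quoted from Shan, Liang and Kang (reference \cite{ci2}); the present paper gives no proof of it, so your attempt can only be judged on its own terms. Your reductions are sound: splitting into blocks and flipping colors down the block--cut tree is fine, the observation that an odd cycle of length at least five cannot be a proper block of a connected claw-free graph is correct, and the argument that the graph $G_1$ of triangle-free edges is a forest (via $\Delta(G_1)\le 2$ and the fact that a vertex in a triangle has at most one $G_1$-neighbor, both correct consequences of claw-freeness) is a genuine and useful structural step.

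The gap is exactly where you say you expect it: you never establish that a proper $2$-coloring of the forest $G_1$ can be chosen to leave no triangle of $G$ monochromatic, and this is the entire difficulty. A proper $2$-coloring of a tree of $G_1$ is rigid up to complementation, so a triangle whose three vertices all lie in one tree can force a monochromatic clique no matter which of the two colorings you pick; the forest structure by itself buys you nothing here. Your Four Color Theorem route does not close this: first, the condition you state for the palette graph is wrong --- you need a partition of $\{1,2,3,4\}$ into two independent pairs, and $K_{1,3}$ is a bipartite (hence triangle-free) palette graph admitting no such partition, so ``bipartite, equivalently triangle-free'' is not the right criterion; second, and more seriously, the claim that $c_4$ can always be recolored along the trees of $G_1$ to make the palette graph admissible is asserted but not argued, and it is not at all clear it is true. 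The alternative induction you sketch (deleting or contracting at a vertex of degree at most five) is likewise only a plan: you do not verify that the contraction preserves claw-freeness and planarity in all configurations, nor how a clique coloring of the contracted graph lifts back when the identified vertex sits in several maximal cliques. As it stands the proposal is a plausible strategy with correct preprocessing but no proof of its central step.
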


  Shan and Kang generalized the result of Theorem~\ref{a1} to $K_5$-minor free graphs and the result of 
  Theorem~\ref{a2} to graphs which are claw-free and $K_5$-subdivision free~\cite{cisk} as follows.
  
\begin{theorem}~\rm\cite{cisk}
 Every $K_5$-minor free graph is strongly $3$-clique colorable.
\end{theorem}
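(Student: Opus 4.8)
The plan is to induct using Wagner's structure theorem: every $K_5$-minor-free graph arises from planar graphs and copies of the Wagner graph $V_8$ (the $8$-cycle $C_8$ together with its four main diagonals $\{i,i+4\}$) by clique-sums over cliques of size at most three; equivalently, $G$ has a tree-decomposition of adhesion at most three whose torsos are planar or isomorphic to $V_8$ and whose adhesion sets are cliques in the torsos. I would induct on the number of torsos. The base cases are the torsos themselves: a planar torso is handled directly by Theorem~\ref{a1}, while $V_8$, being triangle-free, has exactly its edges as maximal cliques and no triangles at all, so a strong $3$-clique colouring of $V_8$ is simply a proper $3$-colouring, which exists because $V_8$ is $3$-chromatic (it contains a $5$-cycle and is easily $3$-coloured, e.g.\ by residues mod $3$).

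For the inductive step, peel off a leaf torso: write $G$ as a clique-sum $G = G_1 \oplus_S G_2$ with $S$ a clique of size at most three, $G_2$ a single planar or $V_8$ torso, and $G_1$ a $K_5$-minor-free graph with fewer torsos, and let $c_1$ be a strong $3$-clique colouring of $G_1$ given by the inductive hypothesis. Since a clique-sum creates no edge between $V(G_1)\setminus S$ and $V(G_2)\setminus S$, every triangle and every maximal clique of $G$ lies entirely inside $G_1$ or inside $G_2$; hence if $c_1|_S$ can be extended to a strong $3$-clique colouring $c_2$ of $G_2$, then $c_1\cup c_2$ is a strong $3$-clique colouring of $G$. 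Note that $c_1|_S$ is already non-monochromatic whenever $S$ is a triangle, or an edge that is a maximal clique of $G_1$. Thus the whole argument reduces to the following precolouring-extension statement, which I would isolate as the main lemma: \emph{for every planar graph $P$ (and for $P\cong V_8$), every clique $K$ of $P$ with $|K|\le 3$, and every colouring $\psi\colon K\to\{1,2,3\}$ that is non-monochromatic whenever $K$ is a triangle of $P$ or an edge that is a maximal clique of $P$, the colouring $\psi$ extends to a strong $3$-clique colouring of $P$.}

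The main obstacle, I expect, is this lemma in the planar case, which is a genuine strengthening of Theorem~\ref{a1}. I would prove it by first reducing to the case that $P$ is a near-triangulation — adding edges preserves planarity, keeps $\psi$ legal as long as the (at most three) edges of $K$ are kept, and only enlarges the set of triangles that a strong colouring must break — and then revisiting Mohar and Skrekovski's inductive proof of Theorem~\ref{a1}, carrying a prescribed legal colouring of the bounded clique $K$ through each reduction (removing or identifying a vertex of small degree, handling separating triangles, and so on); checking that every such reduction can be carried out while respecting an arbitrary legal precolouring of $K$ is the heart of the matter. A secondary technical point is the bookkeeping around the clique-sums: Wagner's $3$-sums may delete an edge of the shared triangle and the torsos carry virtual edges, so one must verify that ``triangle of $G$'' and ``maximal clique of $G$'' match up correctly with the pieces — in particular that an edge which becomes a maximal clique of $G$ only after the sum is still coloured bichromatically — which requires a careful choice of the decomposition and a mild strengthening of the inductive hypothesis tracking exactly these edges; since a strong colouring forbids only monochromatic triangles and monochromatic maximal cliques, the resulting constraints move in a controllable direction and, once set up, this part is routine.
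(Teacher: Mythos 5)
The paper does not actually prove this statement---it is quoted from \cite{cisk}---so the fair benchmark is the paper's own proof of the analogous Theorem~\ref{a5} for $K_{3,3}$-minor free graphs, which follows exactly your template: a Wagner-type decomposition into summands, induction on the length of the sequence, and a precoloring-extension step on the shared clique carried out via Lemma~\ref{l11}. Your overall strategy (clique-sums of planar graphs and $V_8$ over cliques of size at most three, triangle-freeness of $V_8$, induction on the number of torsos) is the intended one and the skeleton is sound.

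Two of the steps you dismiss as routine are genuinely problematic as written. First, the reduction of your main lemma to near-triangulations by adding edges is not sound for clique-coloring: adding edges can absorb a maximal-clique edge $uv$ of $P$ into a triangle of the triangulated graph $P'$, and a strong $3$-clique coloring of $P'$ is then free to give $u$ and $v$ the same color, which violates the clique-coloring condition back in $P$. This is precisely why Theorem~\ref{a1} and Lemma~\ref{l11} are not one-line consequences of the triangulated case, and why the edge-that-is-a-maximal-clique situation is treated separately throughout the paper. Second, the hypothesis of your main lemma does not match what the induction supplies. When $S=\{u,v\}$ is an edge that is a maximal clique of the torso $P=G_2$ but not of $G$ (because $u$ and $v$ acquire a common neighbour on the $G_1$ side), the coloring $c_1$ may legitimately satisfy $c_1(u)=c_1(v)$, and then your lemma simply does not apply---yet this case must be handled, since it is perfectly fine for $uv$ to be monochromatic in $G$. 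This is exactly the case the paper treats in Theorem~\ref{a5} by the contraction trick $T'_r=T_r\cdot e$: color the contracted planar graph and pull the coloring back so that $u,v$ share a color while every triangle of $T_r$ through $uv$ is still broken. The ``mild strengthening tracking exactly these edges'' that you defer (namely: allow $K$ itself to be the unique monochromatic maximal clique of $P$, and prove the extension under that weaker hypothesis) is the actual content of the argument, not bookkeeping. A smaller point in the same vein: a triangle of a planar graph need not bound a face in any embedding (a separating triangle of a $3$-connected planar graph never does), so invoking Lemma~\ref{l11} requires either refining the decomposition until the adhesion triangles are facial or proving the extension lemma for non-facial triangles.
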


\begin{theorem}~\rm\cite{cisk}
 Every graph which is claw-free and $K_5$-subdivision free, different from an odd cycle, is $2$-clique colorable.
\end{theorem}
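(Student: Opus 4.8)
I would argue by induction on $|V(G)|$; throughout, ``graph'' is short for a connected, claw-free, $K_5$-subdivision free graph that is not a cycle of odd length at least $5$, and the claim is that every such graph is $2$-clique colorable. The base cases (graphs on a bounded number of vertices, in particular $K_2,K_3,K_4$) are immediate, using that $K_5$-subdivision freeness gives $\omega(G)\le 4$. For the inductive step, first reduce connectivity: if $G$ has a cut vertex, choose an end-block $B$, meeting a single cut vertex $c$, and set $G'=G-(V(B)\setminus\{c\})$. Both $B$ and $G'$ are connected, claw-free and $K_5$-subdivision free with fewer vertices than $G$, and neither is a cycle of odd length $\ge 5$: were $B$ such a cycle, the two neighbours of $c$ on $B$ would be non-adjacent, and each would be non-adjacent to any neighbour of $c$ outside $B$ (an edge between them would force a triangle contained in a single block), producing a claw at $c$; likewise for $G'$. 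Hence $B$ and $G'$ are $2$-clique colorable, by induction or, if one of them is $K_2$ or $K_3$, directly. Since $B$ and $G'$ share only $c$, and every maximal clique of $G$ lies in $B$ or in $G'$ and is maximal there (a vertex of $B$ other than $c$ has all its neighbours in $B$), merging the two colorings---after swapping the two colours on one side if $c$ got different colours---gives a $2$-clique coloring of $G$.

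So assume $G$ is $2$-connected. Two facts are used repeatedly: $\omega(G)\le 4$, and for every vertex $v$ the graph $G[N(v)]$ has independence number at most $2$ and no $K_4$, hence (by Ramsey's theorem) at most $8$ vertices; thus $\Delta(G)$ is bounded and every vertex lies in only boundedly many maximal cliques, each of size at most $4$. Now split into three cases. If $G$ has no odd hole, then $G$ is $2$-clique colorable by D\'efossez's theorem~\cite{cid9}. If $G$ is planar, then, not being an odd cycle, $G$ is $2$-clique colorable by Theorem~\ref{a2}. Otherwise $G$ has an odd hole and is non-planar, so, being $K_5$-subdivision free, it contains a $K_{3,3}$-subdivision $H$.

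The core of the argument is to use $H$ and claw-freeness to locate a \emph{reducible vertex} $v$: one for which a graph $G^{*}$ obtained from $G$ by deleting $v$, by contracting an edge at $v$, or (when $d_G(v)=2$) by suppressing $v$, is again a connected claw-free $K_5$-subdivision free graph with fewer vertices, and such that some $2$-clique coloring of $G^{*}$ extends to $G$. Extending to $v$ fails only if, after deleting $v$, one maximal clique through $v$ is monochromatic in colour $1$ and another in colour $2$; one avoids this by the choice of $v$ (a vertex of degree $2$, or a vertex interior to a subdivided edge of $H$, are natural candidates) and, if needed, by first locally recolouring a neighbour of $v$, which is possible because $\omega(G)\le 4$ and $\Delta(G)$ is bounded. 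The degenerate outcomes---$G^{*}$ becoming an odd cycle, or a maximal clique of $G$ being destroyed or created---correspond to finitely many local patterns around $v$, each either impossible under claw-freeness or admitting an explicit $2$-clique coloring (for example, in several of these patterns $G$ itself turns out to be an even cycle or a small fixed graph). One could instead try to run the argument over a clique-sum decomposition of $K_5$-subdivision free graphs into planar and exceptional pieces, since claw-freeness rules out the exceptional pieces, but the reducible-vertex approach is more self-contained.

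\noindent\textbf{Main obstacle.} The real work is the third case: turning non-planarity, i.e.\ the mere presence of a $K_{3,3}$-subdivision, into exploitable \emph{local} information. Claw-freeness forces chords on any $K_{3,3}$-subdivision, so it is not transparent which vertex can be deleted or contracted without recreating a $K_5$-subdivision or breaking the connectivity the induction relies on; identifying the reducible vertex requires a careful analysis of how the branch vertices and subdivision paths of $K_{3,3}$ interact with the triangles that claw-freeness imposes. After that, the colour-extension bookkeeping, using $\omega(G)\le 4$ and the degree bound, is routine, and the planar and odd-hole-free subcases cost nothing.
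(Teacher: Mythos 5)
Your reduction to the $2$-connected case is sound, and the planar and odd-hole-free subcases are correctly dispatched by Theorem~\ref{a2} and D\'efossez's theorem~\cite{cid9}. But the third case --- $G$ $2$-connected, non-planar, containing an odd hole --- is where the entire theorem lives, and there you have written a plan, not a proof. You never establish that a ``reducible vertex'' exists: nothing in the proposal shows that some vertex of the $K_{3,3}$-subdivision can be deleted, contracted, or suppressed without creating a $K_5$-subdivision, without disconnecting $G$, and without producing an odd cycle or destroying/creating maximal cliques in an uncontrollable way. The claim that the failure modes ``correspond to finitely many local patterns, each either impossible under claw-freeness or admitting an explicit $2$-clique coloring'' is asserted, not verified; none of the patterns is listed. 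Most seriously, the extension step is exactly where clique-coloring arguments break: after recolouring a neighbour of $v$ to fix one monochromatic maximal clique, a different maximal clique through that neighbour may become monochromatic, and the repair can propagate arbitrarily far --- compare the proof of Proposition~\ref{a7} in this paper, which must chase such repairs along paths and across blocks; that bookkeeping is not ``routine''. So the core of the argument is a genuine gap.

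Two further remarks. First, the route actually taken in~\cite{cisk} (and mirrored in this paper for $K_{3,3}$-minor free graphs) is entirely different: it combines the Chudnovsky--Seymour structure theorem for claw-free graphs~\cite{cmp} with the clique-sum decomposition of the subdivision-closed class, and then treats each structural class $F_0,\dots,F_7$ and each decomposition (twins, $W$-joins, $0$-, $1$-, $2$-joins, hex-joins) separately; no ``reducible vertex'' is sought. Second, your own Ramsey computation ($\alpha(G[N(v)])\le 2$ and $\omega(G[N(v)])\le 3$ give $d(v)\le R(3,4)-1=8$) comes within one of the hypothesis of Theorem~\ref{af}, which covers all connected claw-free graphs with $\Delta\le 7$. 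If you could rule out degree-$8$ vertices --- that is, show that no claw-free, $K_5$-subdivision-free graph has a vertex whose neighbourhood induces an $8$-vertex Ramsey $(3,4)$-graph --- the whole theorem would follow from Theorem~\ref{af} with no further case analysis. That would be a short and genuinely different proof; as written, however, the proposal closes neither route.
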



In this paper, we generalize the result of Theorem~\ref{a1} to $K_{3,3}$-minor free graphs and the result of Theorem~\ref{a2} to claw-free and $K_{3,3}$-minor ($K_{3,3}$-subdivision) free graphs.

\section{Preliminaries}
In this section, we state the structure theorem of claw-free graphs that is proved by Chudnovsky and Seymour~\cite{cmp}. At first we need a number of definitions.

Two adjacent vertices $u, v$ of graph $G$ are called twins if they have the same neighbors in $G$, and if there are two such vertices, we say $G$ admits twins. For a vertex $v$ in $G$ and a set $X \subseteq V (G) \setminus \lbrace v\rbrace $, we say that $v$ is complete to $X$ or $X$-complete if $v$ is adjacent to every vertex in $X$; and that $v$ is anticomplete to $X$ or $X$-anticomplete if $v$ has no neighbor in $X$. For two disjoint subsets $A$ and $B$ of $V(G)$, we say that $A$ is complete, respectively anticomplete, to $B$, if every vertex in $A$ is complete, respectively anticomplete, to $B$. A vertex is called singular if the set of its non-neighbors induces a clique.

 Let $G$ be a graph and $A, B$ be disjoint subsets of $V(G)$, the pair $(A, B)$ is called homogeneous pair in $G$, if  for every vertex $v \in V(G)\setminus (A\cup B)$, $v$ is either $A$-complete or $A$-anticomplete and either $B$-complete or $B$-anticomplete. If one of the subsets $A$ or $B$, for instance $B$ is empty, then $A$ is called a homogeneous set.

 Let $(A, B)$ be a homogeneous pair, such that $A, B$ are both cliques, and $A$ is neither complete nor anticomplete to $B$, and at least one of $A, B$ has at least two members. In these conditions the pair $(A, B)$ is called a $W$-join. 
  A homogeneous pair $(A, B)$ is non-dominating if some vertex of $V(G)\setminus (A\cup B)$ has no neighbor in $A\cup B$, and it is coherent if the set of all $(A\cup B)$-complete vertices in $V(G)\setminus (A\cup B)$ is a clique.

Next, suppose that $V_1, V_2$ is a partition of $V(G)$ such that $V_1, V_2$ are non-empty and $V_1$ is anticomplete to $V_2$. The pair $(V_1, V_2)$ is called a $0$-join in $G$.

Next, suppose that $V_1, V_2$ is a partition of $V(G)$, and for $i = 1, 2$ there is a subset $A_i \subseteq V_i$ such that:
\begin{itemize}
\item[1)]  $A_i$ is a clique, and $A_i$, $V_i \setminus A_i$ are both non-empty;
\item[2)] $A_1$ is complete to $A_2$;
\item[3)] $V_1 \setminus A_1$ is anticomplete to $V_2$, and $V_2 \setminus A_2$ is anticomplete to $V_1$.
\end{itemize}
 In these conditions, the pair $(V_1, V_2)$ is a $1$-join.

Now, suppose that $V_{0}, V_{1}, V_{2}$ is a partition of $V(G)$, and for $i = 1, 2$ there are subsets $A_i$, $B_i$ of $V_i$ satisfying the following properties:
\begin{itemize}
\item[1)]  $A_i$, $B_i$ are cliques, $A_i \cap B_i = \emptyset $, and $A_i$, $B_i$ and $V_i \setminus (A_i \cup B_{i})$ are all non-empty; 
\item[2)] $A_1$ is complete to $A_2$, and $B_1$ is complete to $B_2$, and there are no other edges between $V_1$ and $V_2$;
  \item[3)] $V_0$ is a clique, and, for $i = 1, 2$, $V_0$ is complete to $A_i\cup B_i$ and anticomplete to $V_i \setminus (A_i \cup B_i )$.
\end{itemize}
   The triple $(V_0, V_1, V_2)$ is called a generalized $2$-join, and, if $V_0 = \emptyset $, the pair $(V_1, V_2)$ is called a $2$-join.

The last decomposition is the following. Let $(V_1, V_2)$ be a partition of $V(G)$, such that for $i = 1, 2$, there are cliques $A_i , B_i ,C_i \subseteq V_i$ with the following properties:
\begin{itemize}
 \item[1)] the sets $A_i ,B_i , C_i$ are pairwise disjoint and have union $V_i$;
 \item[2)] $V_1$ is complete to $V_2$ except that there are no edges between $A_1$ and $A_2$, between $B_1$ and $B_2$, and between $C_1$ and $C_2$; and
  \item[3)] $V_1$, $V_2$ are both non-empty.
\end{itemize}  
   In these conditions it is said that $G$ is a hex-join of $V_1$ and $V_2$.

Now we define classes $F_0,\ldots ,F_7$ as follows:
\begin{itemize}
\item $F_0$ is the class of all line graphs.

\item The icosahedron is the unique planar graph with $12$ vertices of all degree five. For $k = 0, 1, 2, 3$, icosa($−k$) denotes the graph obtained from the icosahedron by deleting $k$ pairwise adjacent vertices. 
The class $ F_1$  is the family of all graphs $G$  isomorphic to icosa(0), icosa(−1), or icosa(−2).

\item Let $H$ be the graph with vertex set $\lbrace v_1,\ldots, v_{13} \rbrace $, with the following adjacency: $v_1v_2 \ldots v_6v_1$ is a hole in $G$ of length 6; $v_7$ is adjacent to $v_1, v_2$; $v_8$ is adjacent to $v_4, v_5$ and possibly to $v_7$; $v_9$ is adjacent to $v_6, v_1, v_2, v_3$; $v_{10}$ is adjacent to $v_3, v_4, v_5, v_6, v_9$; $v_{11}$ is adjacent to $v_3, v_4, v_6, v_1, v_9, v_{10}$; $v_{12}$ is adjacent to $v_2, v_3, v_5, v_6, v_9, v_{10}$;
$v_{13}$ is adjacent to $v_1, v_2, v_4, v_5, v_7, v_8$ and no other pairs are adjacent. 
The class  $F_2$ is the family of all graphs $G$  isomorphic to $H\setminus X$, where $X \subseteq \lbrace v_{11}, v_{12}, v_{13}\rbrace $.

\item Let $C$ be a circle, and $V(G)$ be a finite set of points of $C$. Take a set of subset of $C$ homeomorphic to interval [0, 1] such that there are not three intervals covering $C$ and no two  intervals share an end-point. Say that $u, v \in V(G)$ are adjacent in $G$ if the set of points $\lbrace u, v\rbrace $ of $C$ is a subset of one of the intervals. Such a graph is called circular interval graph. The class $F_3$ is the family  of all circular interval graphs.

\item Let $H$ be the graph with seven vertices $h_0,\ldots, h_6$, in which $h_1,\ldots, h_6$ are pairwise adjacent and $h_0$ is adjacent to $h_1$. Let $H^{'}$ be the graph obtained from the line graph $L(H)$  by adding one new vertex, adjacent precisely to the members of $V(L(H)) = E(H)$ that are not incident with $h_1$ in $H$. Then $H^{'}$ is claw-free. Let $F_4$ be the class of all graphs isomorphic to induced subgraphs of $H^{'}$. Note that the vertices of $H^{'}$ corresponding to the members of $E(H)$ that are incident with $h_1$ in $H$ form a clique in $H^{'}$. So the class  $F_4$ is the family of graphs that is  either a line graph or  has a singular vertex.

\item Let $n \geq 0$. Let $A = \lbrace a_1,\ldots, a_n\rbrace $, $B = \lbrace b_1,\ldots, b_n\rbrace $, $C = \lbrace c_1,\ldots, c_n\rbrace $ be three cliques, pairwise disjoint. For $1 \leq i, j \leq n$, let $a_i , b_j$ be adjacent if and only if
$i = j$, and let $c_i$ be adjacent to $a_j, b_j$ if and only if $i \neq j$. Let $d_1, d_2, d_3, d_4, d_5$ be five more vertices, where $d_1$ is $(A \cup B \cup C)$-complete; $d_2$ is complete to $A \cup B \cup \lbrace d_1\rbrace $; $d_3$
is complete to $A \cup \lbrace d_2\rbrace $; $d_4$ is complete to $B \cup \lbrace d_2, d_3\rbrace $; $d_5$ is adjacent to $d_3, d_4$; and there are no more edges. Let the graph just constructed be $H$. A graph $G \in F_5$ if (for some $n$) $G$ is isomorphic to $H\setminus X$ for some $X \subseteq A \cup B \cup C$. Note that vertex $d_1$ is adjacent to all the vertices but the triangle formed by $d_3, d_4$ and $d_5$, so it is a singular vertex in $G$.

\item Let $n \geq 0$. Let $A = \lbrace a_0,\ldots, a_n\rbrace $, $B = \lbrace b_0,\ldots ,b_n\rbrace $,
 $C = \lbrace c_1,\ldots , c_n\rbrace $ be three cliques, pairwise disjoint. For $0 \leq i, j \leq n$, let $a_i , b_j$ be adjacent if and only if $i = j > 0$, and for $1 \leq i \leq n$ and $0 \leq j \leq n$ let $c_i$ be adjacent to $a_j$ , $b_j$ if and only if $i \neq j \neq 0$. Let the graph just constructed be $H$. A graph $G \in F_6$ if (for some $n$) $G$ is isomorphic to $H\setminus X$ for some $X \subseteq (A\setminus \lbrace a_0\rbrace ) \cup (B\setminus \lbrace b_0\rbrace ) \cup C$.
 \item A graph $G$ is prismatic, if for every triangle $T$ of $G$, every vertex of $G$ not in $T$ has a unique neighbor in $T$. A graph $G$ is antiprismatic if its complement  is prismatic. The class  $F_7$ is the family  of all antiprismatic graphs.
 \end{itemize}
 
The structure theorem in~\cite{cmp} is as follows:
\begin{theorem}~\rm\cite{cmp}\label{a3}
If $G$ is a claw-free graph, then either
\begin{itemize}
\item $G \in F_0 \cup \cdots \cup F_7$, or
\item $G$ admits either twins, a non-dominating $W$-join, a $0$-join, a $1$-join, a generalized $2$-join, or a hex-join.
\end{itemize}
\end{theorem}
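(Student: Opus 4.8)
Theorem~\ref{a3} is the global structure theorem for claw-free graphs, quoted from~\cite{cmp}; a self-contained argument would reproduce essentially the whole ``claw-free graphs'' series, so I only sketch the line of attack. My plan would be to assume that $G$ is claw-free and admits \emph{none} of the six listed decompositions, and to show that then $G\in F_0\cup\cdots\cup F_7$. First I would dispose of the easy reductions: if $G$ is disconnected then splitting off one component exhibits a $0$-join, so $G$ may be taken connected; if $G$ has twins, or a non-dominating $W$-join, we are already done; hence I may assume $G$ is connected, twin-free, and has no non-dominating $W$-join, and the whole problem reduces to describing such ``prime'' graphs.

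For prime connected $G$ the analysis is \emph{local}: I would study how the neighbourhood of a vertex, and more generally the common neighbourhood of a small clique, lies inside $G$. To get an induction off the ground one passes to \emph{trigraphs}, where certain pairs are ``semiadjacent''; contracting a possibly dominating $W$-join produces a semiadjacent pair, and this is the device by which dominating $W$-joins --- absent from the decomposition list --- get absorbed. The target of this stage is a \emph{strip-structure theorem}: every prime connected claw-free (tri)graph is built by gluing, along cliques, a bounded list of small ``strips'', each of line-graph type, of circular-interval type, or of one of finitely many sporadic shapes.

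The remainder would be a case analysis on the strip types. If all strips are of line-graph type, $G$ is essentially a line graph --- either $G\in F_0$, or the gluing pattern produces a separation that a $1$-join, a generalized $2$-join, or a hex-join captures, contradicting primeness, so in fact $G\in F_0$. A circular-interval strip pushes $G$ into $F_3$; and the sporadic strips force $G$ to be one of the explicit graphs making up $F_1,F_2,F_4,F_5,F_6$ (icosahedral graphs, the thirteen-vertex graph with its listed adjacencies, induced subgraphs of $H'$ with a singular vertex, and the two near-line-graph families). Separately I would handle the antiprismatic graphs $F_7$ on their own, since triangle-rich graphs of that type do not fit the strip framework; here one argues directly that a claw-free graph with many triangles and no admissible decomposition must be antiprismatic.

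The main obstacle is unambiguous: everything past the trivial reductions is hard. The strip-structure theorem together with the trigraph induction is long and technical; matching each sporadic strip to exactly one listed family --- with the precise adjacencies for $F_2$, $F_5$, $F_6$ --- is delicate bookkeeping; and one must verify that limiting the list to \emph{non-dominating} $W$-joins is sound, i.e.\ that every dominating $W$-join is removed by trigraph contraction without leaving the basic classes. Since the present paper only \emph{uses} Theorem~\ref{a3}, I take it as given from~\cite{cmp}.
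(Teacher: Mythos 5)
The paper gives no proof of this statement --- it is quoted directly from Chudnovsky and Seymour \cite{cmp} --- and you correctly treat it the same way, deferring to that reference, so there is nothing to check your argument against. Your sketch of the published proof is a reasonable high-level account (though, strictly speaking, the decomposition theorem in \cite{cmp} is established by a direct analysis of graphs containing certain induced subgraphs, with the strip-structure formulation appearing only later in that series), and taking the theorem as a black box is exactly what this paper does.
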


\section{$K_{3,3}$-minor free graphs}
In this section,  we focus on  the clique chromatic number of $K_{3,3}$-minor free graphs. In particular, we prove that every $K_{3,3}$-minor free graph is strongly $3$-clique colorable. Moreover, it is $2$-clique colorable if it
 is claw-free and different from an odd cycle.

For this purpose, first we need the Wagner charactrization of $K_{3,3}$-minor free graphs~\cite{wagner}.
Let $G_1$ and $G_2$ be graphs with disjoint vertex-sets. Also, let $k \geq  0$ be an integer, and for $i =1 ,2$, 
  let $X _i\subseteq V(G _i)$ be a set of cardinality $k$ of pairwise adjacent vertices. For $i =1,2$,  let $G'_ i$ be obtained from $G_i$ by deleting a (possibly empty) set of edges with both ends in $X_i$. If  $f:X _1\longrightarrow X _2$ is a bijection, and  $G$ is the graph obtained from the union of $G'_1$ and $G'_2$ by identifying $x$ with $f(x)$ for all $x \in X_1$, then  we say that $G$ is a $k$-sum of $G_1$ and $G_2$.

\begin{theorem}~\rm\cite{ci4,wagner}
A graph is $K_{3,3}$-minor free if and only if it can be obtained from planar graphs and  complete graph $K_5$ by meens of $0$-, $1$-, $2$-sums.
\end{theorem}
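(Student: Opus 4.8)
The plan is to prove both directions, with essentially all of the work in the ``only if'' direction. For the ``if'' direction, I would first dispose of the two base cases: planar graphs are $K_{3,3}$-minor free by Kuratowski's theorem, and $K_5$ is $K_{3,3}$-minor free because $K_{3,3}$ has six vertices while $K_5$ has five. It then suffices to show that the class of $K_{3,3}$-minor free graphs is closed under $0$-, $1$- and $2$-sums. So let $G$ be a $k$-sum of $K_{3,3}$-minor free graphs $G_1$ and $G_2$ with $k \le 2$, and let $(A,B)$ be the induced separation of $G$ with $A = V(G_1)$, $B = V(G_2)$ and $|A \cap B| = k \le 2$. If $G$ had a $K_{3,3}$-minor, then, because $K_{3,3}$ is $3$-connected and has more than three vertices, the well-known fact that a $3$-connected minor of a graph cannot cross a separation of order at most two implies that $K_{3,3}$ is already a minor of $G[A]$ together with all edges inside $A \cap B$, or of $G[B]$ together with all edges inside $A \cap B$. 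Since $A \cap B$ is a clique in each of $G_1, G_2$, that graph is a subgraph of $G_1$ or of $G_2$, contradicting the hypothesis. Hence $G$ is $K_{3,3}$-minor free.

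For the ``only if'' direction, I would induct on $|V(G)|$ and show that every $K_{3,3}$-minor free graph $G$ is planar, equal to $K_5$, or a $k$-sum with $k \le 2$ of two $K_{3,3}$-minor free graphs on strictly fewer vertices; the induction hypothesis then finishes the decomposition. Assume $G$ is nonplanar. By Kuratowski's theorem $G$ contains a subdivision of $K_5$ or of $K_{3,3}$; the latter would give a $K_{3,3}$-minor (as noted in the excerpt, $K_{3,3}$-minor freeness and $K_{3,3}$-subdivision freeness coincide), so $G$ contains a $K_5$-subdivision $S$. If $G$ is not $3$-connected, take a nontrivial separation $(A,B)$ of order $k \le 2$, and let $G_1$, $G_2$ be obtained from $G[A]$, $G[B]$ by adding all edges inside $A \cap B$; then $G$ is a $k$-sum of $G_1$ and $G_2$, both on fewer vertices. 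One checks that $G_1$ and $G_2$ are $K_{3,3}$-minor free: a $K_{3,3}$-minor of $G_i$ that uses no added edge is a minor of $G[A]$ or $G[B]$, hence of $G$; and one using an added edge $xy$ can be rerouted along an $x$--$y$ path whose interior lies in the other side of the separation (such a path exists since $G$ is $2$-connected; the cut-vertex and disconnected cases are easier and use no added edges), again yielding a $K_{3,3}$-minor of $G$ --- in all cases a contradiction. So the induction applies to $G_1$ and $G_2$.

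The remaining case is when $G$ is $3$-connected, nonplanar and $K_{3,3}$-minor free, and I expect this to be the main obstacle: one must prove $G = K_5$. Using the $K_5$-subdivision $S$ with branch vertices $b_1, \dots, b_5$ and branch paths $B_{ij}$, one analyses the bridges of $S$ in $G$ (the components of $G - V(S)$ with their attachments, together with the chords of $S$) and shows that any deviation from $G = K_5$ forces a $K_{3,3}$-minor. The two obstructions to rule out are a branch path $B_{ij}$ with an internal vertex, and a bridge with two or more attachments; in each case $3$-connectivity (via Menger's theorem) supplies a sixth vertex joined by three internally disjoint paths to a suitable triple among $\{b_1, \dots, b_5\}$, which together with the paths of $S$ already present exhibits $K_{3,3}$ as a minor. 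Once both obstructions are excluded, $S$ has no internal vertices, no chords and no further vertices, i.e.\ $G = K_5$. Making this case analysis exhaustive and verifying that the required disjointness can always be arranged from $3$-connectivity is the heart of Wagner's theorem; the rest, as above, is routine manipulation of separations and minors.
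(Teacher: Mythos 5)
The paper does not actually prove this statement: it is quoted as a classical theorem of Wagner, with pointers to \cite{wagner} and to Thomas's survey \cite{ci4}, so there is no in-paper argument to compare yours against and I can only judge your sketch on its own terms. Its architecture is the standard textbook one, and the reductions are sound. In the ``if'' direction, the fact that a $3$-connected graph on more than three vertices cannot appear as a minor across a separation of order at most two, combined with the observation that the torso of either side is a subgraph of the corresponding summand because the gluing set is a clique there, does show closure of $K_{3,3}$-minor-freeness under $0$-, $1$- and $2$-sums. In the ``only if'' direction, splitting a non-$3$-connected graph along a separation of order at most two and rerouting the added edge through a component of the far side (which exists by $2$-connectivity) is correct; one should also record the routine check that the recursive decomposition you produce can be serialized into the sequential form the paper later uses, but that is bookkeeping.

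The genuine gap is exactly where you locate it: the assertion that a $3$-connected, non-planar, $K_{3,3}$-minor-free graph must equal $K_5$ is the entire content of Wagner's theorem, and your proposal names the right obstructions without proving that either one forces a $K_{3,3}$-minor. That verification is not a formality. The three internally disjoint paths supplied by Menger from a sixth vertex, or the connection issuing from an internal vertex of a branch path, need not attach at branch vertices: they may land in the interiors of branch paths, including two branch paths sharing a branch vertex, and in some of these configurations one exhibits $K_{3,3}$ only as a minor after a contraction rather than as a subdivision, so the argument must be phrased in terms of branch sets. Degenerate attachments (for instance a chord joining two internal vertices of the same branch path) are usually disposed of by fixing a $K_5$-subdivision with as few vertices as possible, a normalization your sketch never sets up, and without it the claim that ``any deviation forces a $K_{3,3}$-minor'' is not yet established. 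Until that case analysis is carried out in full, what you have is a correct outline of the standard proof rather than a proof.
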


In order to make the above characterization easier,  we  use the structural sequence for $K_{3,3}$-minor free graphs. In fact, 
graph $G$ is $K_{3,3}$-minor free if and only if there  exists a sequence $\mathcal{T}=T_1, T_2, \ldots, T_r$, in which for each $i$, $1\leq i\leq r$, $T_i$ is either a planar graph or isomorphic with  $K_5$,  such that $G_1=T_1$, and  for each $i$,  $2\leq i\leq r$, $G_i$ is obtained from disjoin union of  $G_{i-1}$ and  $T_i$, or by gluing $T_i$ to   $G_{i-1}$ on  one vertex or one edge  or two non-adjacent vertices and $G_r=G$. For a given  $K_{3,3}$-minor free $G$, the sequence $\mathcal{T}$ is called a Wagner sequence.

Also we need following lemma proposed in~\cite{ci1}. 
\begin{Lemma}~\rm\cite{ci1}\label{l11}
 Let $G$ be a connected plane graph  such that its outer cycle, $C$, is a triangle. If $\phi:V(C) \longrightarrow \lbrace 1, 2, 3\rbrace $ is  a clique coloring of induced subgraph $C$, then $\phi$ can be extended to a strong $3$-clique coloring of $G$.
\end{Lemma}

In the following, we use the Wagner sequence  to provide a strong $3$-clique coloring for $K_{3,3}$-minor free graphs.
\begin{theorem}\label{a5}
Every $K_{3,3}$-minor free graph is strongly $3$-clique colorable.
\end{theorem}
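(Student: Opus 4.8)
The plan is to induct on the length $r$ of a Wagner sequence $\mathcal{T}=T_1,\dots,T_r$ of $G$. When $r=1$ the graph $G=T_1$ is planar, in which case we are done by Theorem~\ref{a1}, or it is $K_5$, in which case colouring the five vertices with colour multiplicities $2,2,1$ leaves no triangle monochromatic and keeps the only maximal clique $V(K_5)$ non-monochromatic. So assume $r\ge 2$, write $G=G_r$, and let $G$ arise from $G_{r-1}$ and $T_r$ by a disjoint union, a gluing along one vertex, a gluing along an edge, or a gluing along two non-adjacent vertices; in each case the two pieces share a set $X$ with $|X|\le 2$, and $T_r$ is planar or $K_5$. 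By the induction hypothesis $G_{r-1}$ has a strong $3$-clique colouring $\phi$, and, being planar or $K_5$, so does $T_r$. The fact used throughout is that $G$ has no edge joining $V(G_{r-1})\setminus X$ to $V(T_r)\setminus X$; hence every clique of $G$ — in particular every triangle and every maximal clique of size at least two — lies entirely inside $V(G_{r-1})$ or inside $V(T_r)$.

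The disjoint-union and one-vertex cases are routine: for a gluing along a vertex $x$, permute the colours of the colouring of $T_r$ so that $x$ receives $\phi(x)$ and take the common extension; because $|X|=1$, a maximal clique or triangle of $G$ lying in one piece is a maximal clique or triangle of that piece, so nothing goes wrong. The real work is a gluing along a pair $\{x,y\}$. There I would first build a colouring $\psi$ of $T_r$ with $\psi(x)=\phi(x)$ and $\psi(y)=\phi(y)$ that is non-monochromatic on every triangle of $T_r$ and on every maximal clique of $T_r$ other than possibly $\{x,y\}$ itself. For $T_r\cong K_5$ this is a finite check. For $T_r$ planar I would use Lemma~\ref{l11}: form $T_r^{+}$ by adding a new vertex $z$ adjacent to exactly $x$ and $y$; since $xy$ is an edge of $T_r$ and therefore lies on a face, $T_r^{+}$ has a plane embedding whose outer cycle is the triangle $xyz$; colour this triangle by $\phi(x)$ on $x$, $\phi(y)$ on $y$, and on $z$ a colour keeping it non-monochromatic; extend to a strong $3$-clique colouring $\Psi$ of $T_r^{+}$ by Lemma~\ref{l11}; and take $\psi=\Psi|_{V(T_r)}$. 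The only maximal clique of $T_r$ that can cease to be maximal in $T_r^{+}$ is $\{x,y\}$, which is exactly why $\psi$ has the required property. One then combines $\phi$ and $\psi$ into a colouring $\Phi$ of $G$ and checks it: every triangle of $G$ sits in one piece and so is non-monochromatic; a maximal clique $K$ of $G$ of size at least two sits in one piece, and if it is inside $V(T_r)$ then $\Phi=\psi$ is non-monochromatic on it unless $K=\{x,y\}$, in which case $\{x,y\}$ being maximal in $G$ forces it to be maximal in $G_{r-1}$ (a common neighbour in $G_{r-1}$ would survive), so $\phi(x)\neq\phi(y)$.

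I expect the main obstacle to be exactly the gluing along a pair, and within it the variant where the shared edge $xy$ is deleted in forming $G$ (the ``two non-adjacent vertices'' case). Then $G[V(G_{r-1})]=G_{r-1}-xy$, and deleting $xy$ can promote a non-maximal clique of $G_{r-1}$ to a maximal clique of $G$ — concretely, an edge $\{x,w\}$ lying in a triangle $\{x,y,w\}$ of $G_{r-1}$ whose only common neighbour over $\{x,w\}$ was $y$ — and $\phi$ need not be non-monochromatic on such an edge. Overcoming this needs more than naively amalgamating $\phi$ and $\psi$: one must either recolour locally around $x$ and $y$, or, more likely, strengthen the induction hypothesis so that the colouring of $G_{r-1}$ keeps enough freedom at the vertices where later gluings take place — for instance ensuring that the relevant edges through $x$ and $y$ are $2$-coloured. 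The remaining ingredients, the embedding claim for $T_r^{+}$ and the colourings of $K_5$, are straightforward.
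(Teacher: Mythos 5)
Your overall strategy coincides with the paper's: induction on the length of a Wagner sequence, with the $K_5$ summand handled by a direct colouring and the planar summand handled by reducing to Lemma~\ref{l11} applied to a plane embedding whose outer face is a triangle through the glued pair. Your apex-vertex gadget (adding $z$ adjacent to exactly $x$ and $y$ and re-embedding so that $xyz$ is the outer triangle) is a clean, uniform substitute for the paper's case split on whether $uv$ is a maximal clique of $T_r$ and whether $\phi(u)=\phi(v)$ (in the latter case the paper colours the contraction $T_r\cdot uv$ instead). For the $0$-sum, the $1$-sum, the $K_5$ summand, and the $2$-sum in which the identified edge $xy$ is present in $G$, your argument is complete: every triangle and every maximal clique of $G$ lies in one piece, the only maximal clique of $T_r$ not controlled by $\psi$ is $\{x,y\}$, and if $\{x,y\}$ is maximal in $G$ it is maximal in $G_{r-1}$, so $\phi(x)\neq\phi(y)$.

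The genuine gap is the one you name yourself and then leave unresolved: the gluing on two non-adjacent vertices, i.e.\ the $2$-sum in which the edge $xy$ is deleted. Your diagnosis is correct but incomplete in scope: the deletion creates new maximal cliques on \emph{both} sides, not only in $G_{r-1}$. For instance, if $\{x,y,w\}$ is a triangle of $T_r$ and $y$ is the only common neighbour of $x$ and $w$ in $G$, then $\{x,w\}$ is a maximal clique of $G$ contained in $V(T_r)$ that is \emph{not} a maximal clique of $T_r$, so neither the strongness of $\Psi$ on the triangle $\{x,y,w\}$ nor your maximality analysis in $T_r^{+}$ rules out $\psi(x)=\psi(w)$; the symmetric problem occurs in $G_{r-1}-xy$, where $\phi$ was only guaranteed non-monochromatic on cliques maximal in $G_{r-1}$. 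Proposing to ``recolour locally or strengthen the induction hypothesis'' identifies the obstacle but supplies no argument, so the induction does not close in this case and the proof as written establishes the theorem only for graphs built without edge-deleting $2$-sums. To finish you would need an explicit treatment of the cliques of $T_r-xy$ and of $G_{r-1}-xy$ that become maximal after the deletion (the paper handles the degenerate colour situation by contracting $uv$ in $T_r$ and using that $uv$ is then not a maximal clique of $G_{r-1}$; some analogous device, or a strengthened inductive statement prescribing the colours on the neighbourhoods of $x$ and $y$, is required here).
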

\begin{proof}{
Let $G$ be a $K_{3,3}$-minor free graph. The assertion is trivial for $\vert V(G)\vert \leq 3$. So let $ \vert V(G) \vert \geq 4$ and $\mathcal{T} =T_1, T_2, \ldots, T_r$  be a  Wagner sequence of $G$. We use  induction on $r$. If $r=1$, then $G=T_1$ is either  $K_5$ or a planar graph. If $G$ is $K_5$, then the assertion is obvious, since by assigning  color $1$ to two vertices of $K_5$ and  color $2$ to two vertices of $K_5$ and  color $3$ to rest vertex, we have a strong $3$-clique coloring of $K_5$. Also, if $G$ is a planar graph, then the assertion follows directly from Theorem~\ref{a1}. 

Now let $r \geq 2$. By the induction hypothesis $G_{r-1}$ and $T_{r}$  have  strong $3$-clique coloring. If $G_r$ is $0$-sum of $G_{r-1}$ and $T_r$, then there is nothing to say.  Suppose that $G_r$ is obtained from $G_{r-1}$ and $T_r$ by gluing on vertex $\lbrace v\rbrace $. Thus, by a renaming  of the colors, if it is necessary, we obtain a strong $3$-clique coloring for $G_r$.

Next, we suppose that $G_r$ is obtained from $G_{r-1}$ and $T_r$ by gluing on edge $uv$ or two non-adjacent vertices $u$ and $v$. If $T_r$ is $K_5$, then we consider a strong $3$-clique coloring of $G_{r-1}$, say $\phi$,  and extend it to a strong $3$-clique coloring of $G_r$ as follow. 
If $\phi(u)\neq \phi(v)$, then we assign  three different colors $\lbrace 1,2,3\rbrace $ to the other three vertices of $K_5$. 
If $\phi(u)=\phi(v)$, then we assign  two different colors $\lbrace 1,2,3\rbrace \setminus \lbrace \phi(v)\rbrace $ to the other three vertices of $K_5$. Obviously, the extended coloring is a strong $3$-clique coloring of $G_r$.

Finally, let $T_r$ be a planar graph. We consider a strong $3$-clique coloring  of $G_{r-1}$, say $\phi$, and provide a strong $3$-clique coloring of $G_r$ as follow.  If $\phi(u)\neq \phi(v)$ and  $e=uv$ is a maximal clique of $T_r$, then suppose that $\phi^{'}$ is a  strong $3$-clique coloring of $T_r$. In this case,   by a renaming the color of $\phi'(u)$ and $\phi'(v)$ in $T_r$, if it is necessary, we obtain a strong $3$-clique coloring of $G_r$.  If $e=uv$ is not a  maximal clique in $T_r$, then there exists a triangle $T$ containing  $e$ in $T_r$.  Now we consider a planar embedding of $T_r$ in which $T$ is an outer face in it.  Hence,  by Lemma~\ref{l11}, it is enough to give a  strong $3$-clique coloring of outer cycle $T$ of plane graph $T_r$. 
That is obviously possible by coloring the third vertex of $T$ properly.

If $\phi(u)=\phi(v)$, then let $e=uv$ and  $T^{'}_{r}= T_{r} \cdot e$.  If there is no triangle consist of $e=uv$ in $T^{'}_r$, then we consider a strong $3$-clique coloring $\phi^{'}$ of plane graph $T^{'}_r$,  such that $\phi^{'}(u)=\phi^{'}(v)=\phi(u)=\phi(v)$. Note that edge $e=uv$ is not maximal clique in $G_{r-1}$,  so it is not maximal clique in $G_{r}$. Therefore,  the coloring $\phi(x)$ for $x \in G_{r-1}$ and $\phi^{'}(x)$ for $x \in T_{r} \cdot e$ is a strong $3$-clique coloring for $G_r$. 
 If $e=uv$ is  in triangle $T$ in $T^{'}_{r}$, then we consider a planar embedding of $T^{'}_r$ in which $T$ is an outer face in it. By Lemma~\ref{l11}, it is enough to give a $3$-clique coloring of outer cycle $T$ of plane graph $T^{'}_r$. Thus, we give  $\phi'(u=v)=\phi(u)=\phi(v)$ and   assign two different colors $\lbrace 1,2,3\rbrace \setminus \lbrace \phi(v)\rbrace $ to other two vertices of $T$, then  we extend $\phi'$ to a strong $3$-clique coloring  of $T^{'}_r$. This implies a strong $3$-clique coloring of $T_r$ as desired, and again we obtain a strong $3$-clique coloring of $G_r$.
}\end{proof}

The rest of this section deals with the proof that, 
every claw-free and $K_{3,3}$-minor free graph~$G$, different from an odd cycle of order greater than three, is $2$-clique colorable.
For this purpose,  we need  two following theorems.
 
\begin{theorem}~\rm\cite{cims}\label{aa}
If $G\in F_1\cup F_2\cup F_3\cup F_5\cup F_6$ or $G$ admits a hex-join, different from an odd cycle of order greater than three, then $G$ is $2$-clique colorable.
\end{theorem}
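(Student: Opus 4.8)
The plan is to treat the hypothesis as a disjunction and dispose of each class in turn, in every case producing an explicit $2$-coloring of $V(G)$ under which no maximal clique of size at least two is monochromatic. I keep in mind throughout that the excluded graphs genuinely fail: for an odd cycle $C_n$ with $n\ge5$ the maximal cliques are the edges, so a $2$-clique coloring would be a proper $2$-coloring, impossible for an odd cycle (while $C_3=K_3$ is colored $1,1,2$). The two finite families are then a finite verification: $F_1$ consists of the icosahedron and of icosa$(-1)$, icosa$(-2)$, in all of which the maximal cliques are the triangular faces, so it suffices to split the at most twelve vertices into two classes that meet every face; and $F_2$ is the eight graphs $H\setminus X$, $X\subseteq\{v_{11},v_{12},v_{13}\}$, for each of which one lists the few maximal cliques and writes down a $2$-coloring by inspection.

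For the circular interval graphs $F_3$ I would use the cyclic order $v_1,\dots,v_n$ of the chosen points on the circle $C$. Since no three of the prescribed intervals cover $C$, every clique of $G$ consists of cyclically consecutive vertices and $C$ itself is not a clique; after disposing of trivial components (isolated vertices, single edges, and, by working componentwise, disconnectedness) we walk once around $C$ assigning the colors $1,2,1,2,\dots$. Consecutive vertices always receive different colors except possibly where the walk closes up, so a maximal clique of size at least $3$ (three or more consecutive vertices) is bichromatic, and a maximal clique of size $2$ is bichromatic unless it is the closing pair and $n$ is odd; cutting the circle at a consecutive pair that is not a maximal clique of size $2$ removes even this, and such a pair exists unless every consecutive pair is a maximal clique of size $2$, i.e.\ $G=C_n$ — which is excluded when $n$ is odd and $n\ge5$.

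The families $F_5$ and $F_6$ have a rigid skeleton: each is built from three pairwise disjoint cliques $A,B,C$ together with a bounded number of extra vertices ($d_1,\dots,d_5$ for $F_5$, none for $F_6$), with the remaining adjacencies described by a simple index condition, so the maximal cliques fall into finitely many combinatorial shapes, independent of the parameter $n$. I would fix a coloring rule — color $A$ with $1$ and $B$ with $2$, so that any maximal clique meeting both $A$ and $B$ is already safe, then assign colors to $C$ and to the $d_i$ by hand, using for $F_5$ that $d_1$ is singular: every maximal clique missing $d_1$ meets the triangle $\{d_3,d_4,d_5\}$, and every maximal clique containing $d_1$ lies in $A\cup B\cup C\cup\{d_1,d_2\}$, so only a short list of shapes has to be checked — and the degenerate members that happen to be short odd cycles are excluded.

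For a hex-join $G$ of $V_1=A_1\cup B_1\cup C_1$ and $V_2=A_2\cup B_2\cup C_2$ I would start from two observations: no clique contains both a vertex of $A_1$ and a vertex of $A_2$ (and likewise for $B$ and for $C$), and a maximal clique $Q$ contained in $V_1$ must meet each of $A_1,B_1,C_1$, since a vertex of $A_2$ (resp.\ $B_2$, $C_2$) is adjacent to all of $V_1$ outside $A_1$ (resp.\ $B_1$, $C_1$). When each $V_i$ is itself a clique this already suffices: coloring $A_1\cup A_2$ with $1$ and $B_1\cup B_2$ with $2$ makes every maximal clique contained in a single side bichromatic, and a short case check on the pair of parts met by a straddling maximal clique — no ``diagonal'' pair can occur, and a straddling clique meeting only parts of one color extends into the other side — finishes that subcase. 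The step I expect to be the main obstacle is exactly the general hex-join: when $G[V_i]$ is not a clique, a straddling maximal clique can sit on an unexpected pair of parts, and in fact no part-constant $2$-coloring works in all configurations, so one must descend into the internal structure of $G[V_1]$ and $G[V_2]$ — which are again claw-free, hence again covered by the structural dichotomy — and reconcile a coloring chosen there with the global part pattern, checking compatibility case by case. A secondary nuisance, shared with $F_3$, $F_5$ and $F_6$, is confirming that the graphs admitting no $2$-clique coloring are exactly the odd cycles of order greater than three named in the hypothesis.
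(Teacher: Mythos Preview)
The paper does not give a proof of this theorem: it is quoted verbatim from Liang, Shan and Kang~\cite{cims} and used as a black box, so there is nothing in the present paper to compare your proposal against. What you have written is an independent attempt at a result whose original proof in~\cite{cims} is a multi-page case analysis.

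For the finite families $F_1$, $F_2$ and for the circular interval graphs $F_3$ your plan is sound and essentially the standard one. For $F_5$ and $F_6$ you correctly isolate the singular vertex $d_1$ and the $A/B/C$ skeleton, but ``assign colors to $C$ and to the $d_i$ by hand'' is not yet a proof: after deleting an arbitrary $X$ the list of maximal cliques changes shape, and you have not exhibited a single rule that survives every choice of $X$.

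The genuine gap is the hex-join. You flag it yourself as ``the main obstacle'' and stop at a recursive sketch --- pass to $G[V_1]$ and $G[V_2]$, invoke the structural dichotomy there, then reconcile. That recursion is not well-founded as stated. First, the claim that a maximal clique contained in $V_1$ must meet all of $A_1,B_1,C_1$ uses a vertex in each of $A_2,B_2,C_2$, but the definition of hex-join only requires $V_2\ne\emptyset$; some of the six parts may be empty. Second, even granting that $G$ is claw-free (an assumption implicit in~\cite{cims} but not in the statement as written here), applying the structure theorem to $G[V_i]$ yields not just hex-joins and the $F_j$ but also twins, $W$-joins, $0$-, $1$- and generalized $2$-joins, each of which you would have to handle \emph{and} make compatible with the six-part interface --- and you give no mechanism for that compatibility. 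In~\cite{cims} the hex-join case is in fact handled by a direct coloring argument, not by recursion into the sides; as written your proposal leaves this case open.
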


\begin{theorem}~\rm\cite{cims}\label{af}
Every connected claw-free graph $G$ with maximum degree at most seven, not an odd cycle of order greater than three, is $2$-clique colorable.
\end{theorem}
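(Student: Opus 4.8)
The plan is to prove the theorem by induction on $|V(G)|$, using the Chudnovsky--Seymour structure theorem (Theorem~\ref{a3}) as the engine: a claw-free $G$ either decomposes (via twins, a non-dominating $W$-join, a $0$-, $1$-, or generalized $2$-join, or a hex-join) or lies in one of the basic classes $F_0,\ldots,F_7$. The degree bound $\Delta(G)\le 7$ is what keeps every outcome tractable. Since $G$ is claw-free we have $\alpha(N(v))\le 2$ for every vertex $v$, so each complement $\overline{N(v)}$ is triangle-free on at most seven vertices; every triangle-free graph on at most ten vertices is $3$-colorable (the smallest $4$-chromatic triangle-free graph has eleven vertices), so each neighborhood is covered by at most three cliques. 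This local boundedness is the feature I would exploit repeatedly.

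For the \textbf{decomposition cases} I would split $G$ along the corresponding separating structure, $2$-clique-color the (smaller) pieces by the induction hypothesis, and reconcile the two colorings on the shared part, renaming colors as needed --- the same gluing idea used in the proof of Theorem~\ref{a5}. The shared sets arising in a $1$-join, a generalized $2$-join, and a hex-join are cliques or unions of two cliques, so matching the two colorings there is a finite local check; twins and $W$-joins are removed by deleting or contracting the redundant vertices and re-inserting them with a color forced by their common neighborhood. Each such reduction strictly decreases $|V(G)|$, and the only thing to verify is that no reduction leaves an odd cycle of length $\ge 5$ as the residual graph, in which case it is excluded by hypothesis.

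For the \textbf{basic classes}, the families $F_1,F_2,F_3,F_5,F_6$ and the hex-join are already $2$-clique-colorable away from long odd cycles by Theorem~\ref{aa}, so only $F_0$ (line graphs), $F_4$, and $F_7$ (antiprismatic graphs) remain. For a line graph $G=L(H)$ a $2$-clique-coloring is exactly a red/blue coloring of $E(H)$ in which no maximal star and no maximal triangle is monochromatic; here $\Delta(L(H))\le 7$ forces $\deg_H(x)+\deg_H(y)\le 9$ on every edge $xy$, so $H$ has tightly bounded degrees and such an edge-coloring can be produced directly, the only obstruction being that $L(H)$ is itself an odd cycle. Since a graph in $F_4$ is either a line graph or has a singular vertex, and an antiprismatic graph with $\Delta\le 7$ is of bounded order, these cases reduce either to the line-graph argument, to the singular-vertex reduction, or to a finite verification.

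Finally, the \textbf{triangle-free base case} is immediate: a triangle-free claw-free graph has every neighborhood independent, hence $\Delta(G)\le 2$, so $G$ is a path or a cycle, and the only such graphs that fail to be $2$-clique-colorable are precisely the odd cycles of order greater than three. The hardest part, I expect, is the reconciliation in the decomposition step: maximal cliques of size $2$ (edges with no common neighbor) impose proper-coloring constraints, so when pieces are glued through such edges one must track the \emph{parity} of the combined coloring and show that these parity constraints become jointly unsatisfiable only when $G$ collapses to a single odd cycle. A purely probabilistic shortcut is unavailable --- triangles give a monochromatic-clique probability of $1/4$ against an unbounded number of maximal cliques meeting one vertex, so the symmetric Lovász Local Lemma fails --- and it is exactly here that the explicit bound $\Delta\le 7$, together with the covering of each neighborhood by at most three cliques, must be used to make the local color choices extend to a global $2$-clique-coloring.
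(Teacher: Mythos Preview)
The paper does not contain a proof of this theorem: it is quoted from Liang, Shan and Kang \cite{cims} and used only as a black box (indeed, the very next sentence reads ``From the proof of Theorem~\ref{af}, we conclude the following corollary,'' referring to the argument in \cite{cims}, not to anything carried out in the present paper). There is therefore no ``paper's own proof'' against which to compare your proposal.

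As for the outline itself, it is a plausible strategy but it is a plan, not a proof, and several of the steps you label as routine are exactly where the work lies. In the decomposition cases you say that ``matching the two colorings there is a finite local check'' for $1$-joins, generalized $2$-joins and hex-joins, but this glosses over precisely the parity obstructions you yourself flag in your final paragraph, and you never explain how the bound $\Delta\le 7$ actually resolves them. For $F_0$ you assert that the required red/blue edge-coloring of $H$ ``can be produced directly'' without giving a construction; the observation $\deg_H(x)+\deg_H(y)\le 9$ on each edge is correct but does not by itself yield the coloring. Your bounded-order claim for antiprismatic graphs with $\Delta\le 7$ is right (from $\alpha(G)\le 3$ and $\chi(G)\le \Delta+1=8$ one gets $|V(G)|\le 24$), but ``finite verification'' on graphs of order up to $24$ is not an argument one can write out by hand. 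In short, the architecture is reasonable and may well be close to what \cite{cims} do, but the proposal as written does not establish the theorem.
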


From the  proof of Theorem~\ref{af},  we conclude the following corollary.
\begin{Corollary} \label{cf}
If $G$ is a  connected $K_{3,3}$-minor free graph which  admits either  twins, or a non-dominating $W$-join, or a coherent $W$-join, or a $1$-join, or a generalized $2$-join, except an odd cycle of order greater than three, then $G$ is $2$-clique colorable.
\end{Corollary}

According to Theorem \ref{aa} and Corollary \ref{cf}, it is sufficient to show that every $K_{3,3}$-minor free  graph
 $G\in F_0\cup F_4\cup F_7$ except an odd cycle of order greater than three, is $2$-clique colorable.
  First we show this result for  class $F_0$ (the class of line graphs).

\begin{prop}\label{a7}
Every  $K_{3,3}$-minor free graph  in $F_0$, different from an odd cycle of order greater than three, is $2$-clique colorable.
\end{prop}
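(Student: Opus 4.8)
The plan is to reduce to connected $G$, bound the maximum degree, invoke Theorem~\ref{af} whenever $\Delta(G)\le 7$, and show that the few remaining configurations are either forbidden by $K_{3,3}$-minor-freeness or covered by Corollary~\ref{cf}. We may assume $G=L(H)$ is connected, hence that $H$ is connected: a disconnected $G$ is the disjoint union of the line graphs of the components of $H$, which one colours separately. Line graphs are claw-free, so $G$ is claw-free. For a vertex $w$ of $H$ the edges incident with $w$ form a clique $K_{d_H(w)}$ in $G$; since $K_{3,3}\subseteq K_6$ and $G$ has no $K_{3,3}$-minor, this forces $\Delta(H)\le 5$, and therefore $\deg_G(xy)=(d_H(x)-1)+(d_H(y)-1)\le 8$ for every edge $xy$ of $H$, with equality only when $d_H(x)=d_H(y)=5$. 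If $\Delta(G)\le 7$, then $G$ is a connected claw-free graph of maximum degree at most seven, different from an odd cycle of length greater than three, so Theorem~\ref{af} gives a $2$-clique colouring. This disposes of all cases except when $H$ has an edge $uv$ with $d_H(u)=d_H(v)=5$, which I assume from now on.

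Let $t$ be the number of common neighbours of $u$ and $v$ in $H$. I would first show $t\le 1$. If $w_1,w_2$ are two common neighbours, then in $G$ the sets $\{uw_1,vw_1\}$ and $\{uw_2,vw_2\}$ are each connected (two edges of $H$ sharing the vertex $w_i$), and together with $\{uv\}$ and three further edges of $H$ incident with $u$ or with $v$ --- chosen in the obvious way according to whether $t$ is $2$, $3$, or $4$ (for $t=4$ one uses $uw_4$ and $vw_4$; for smaller $t$ one uses non-common neighbours of $u,v$) --- they form the six branch sets of a $K_{3,3}$-minor of $G$, because each of the nine required adjacencies is a pair of edges of $H$ sharing $u$, sharing $v$, or sharing a $w_i$. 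This contradicts $K_{3,3}$-minor-freeness, so $t\le 1$.

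It remains to treat the case $t\le 1$, in which the two copies of $K_5$ in $G$, the edge-stars $S_u$ and $S_v$, meet only in the vertex $uv$, with at most one further edge joining $S_u\setminus\{uv\}$ to $S_v\setminus\{uv\}$. Here I would analyse the four edges of $H$ at $u$ other than $uv$ (and symmetrically at $v$). If at least two of them end at leaves of $H$, or $u$ lies in a triangle $ua_ia_j$ of $H$ with $d_H(a_i)=d_H(a_j)=2$, then $G$ admits twins and Corollary~\ref{cf} applies. Otherwise at least three of these edges, say $ua_1,ua_2,ua_3$, have their other endpoint of $H$-degree at least two, so the vertices $ua_1,ua_2,ua_3$ of $G$ have neighbours outside $S_u$. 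If $H-u$ contains two internally disjoint paths from $\{a_1,a_2,a_3\}$ to $v$ reaching $v$ through distinct edges, say $Q_1$ from $a_1$ and $Q_2$ from $a_2$, then the branch sets $\{ua_1\}\cup E(Q_1)$, $\{ua_2\}\cup E(Q_2)$, $\{uv\}$, $\{ua_3\}$, $\{ua_4\}$, and a fourth edge at $v$ distinct from those by which $Q_1,Q_2$ reach $v$ realise a $K_{3,3}$-minor of $G$ --- impossible. In the remaining case no such pair of paths exists, so (by Menger's theorem) $u$, or a neighbour of $v$, is a cut vertex of $H$ separating $\{a_1,\dots,a_4\}$ from $v$ (a leaf $a_i$ being trivially separated); this cut vertex induces in $G$ a $1$-join --- the clique $S_u\setminus\{uv\}$ on one side being complete to the singleton clique $\{uv\}$ on the other and the remainder anticomplete --- so Corollary~\ref{cf} applies once more. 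The symmetric cases with $v$ in place of $u$ are identical.

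The main obstacle is this last step: showing that $K_{3,3}$-minor-freeness of $L(H)$ in the presence of two adjacent degree-$5$ vertices of $H$ always forces one of the decompositions handled by Corollary~\ref{cf} (twins, a $W$-join, a $1$-join, or a generalized $2$-join), or else produces an explicit $K_{3,3}$-minor. Everything preceding it --- the reduction to connected graphs, the bound $\Delta(H)\le 5$, the application of Theorem~\ref{af}, and the argument that $t\le 1$ --- is routine once the $K_{3,3}$-minors exhibited above are written out in full.
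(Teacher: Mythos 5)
Your strategy is genuinely different from the paper's proof, which runs an induction along the Wagner sequence of $G$ and repairs $2$-clique colourings across the $2$-sum edge by swapping colours along carefully chosen paths. Most of your route is sound: the reduction to connected $H$, the bound $\Delta(H)\le 5$ coming from $K_{3,3}\subseteq K_6$, the appeal to Theorem~\ref{af} when $\Delta(G)\le 7$, the exclusion of two common neighbours of $u$ and $v$, and the $K_{3,3}$-minor built from two internally disjoint $\{a_1,a_2,a_3\}$--$v$ paths in $H-u$ all check out (modulo routine care with endpoints in the Menger step). If this final configuration were handled correctly, the argument would be a legitimately more structural alternative to the paper's recolouring induction.

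However, the gap is exactly where you flag it, and it is real. When no two such paths exist, Menger gives a single vertex $x$ of $H-u$ separating $\{a_1,a_2,a_3\}$ from $v$ in $H-u$. Unless $u$ itself is a cutvertex of $H$ with $v$ separated from the $a_i$, what you obtain is a $2$-separator $\{u,x\}$ of $H$, not a cutvertex, and a $2$-separator of $H$ does not induce a $1$-join of $L(H)$. Concretely, if $x$ is adjacent to some $a_i$ on the $u$-side and to some vertex $y$ on the $v$-side, then $xa_i$ and $xy$ are adjacent vertices of $G$ lying on opposite sides of your proposed partition, and neither belongs to $S_u$, so the condition that $V_1\setminus A_1$ be anticomplete to $V_2$ fails for $A_1=S_u\setminus\{uv\}$. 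The structure you actually get is at best a generalized $2$-join whose attachment cliques are the edge-stars of $u$ and of $x$ (with $V_0=\{ux\}$ when $ux\in E(H)$), and even that can degenerate: if $a_1,a_2,a_3$ each have degree two with second neighbour $x$, then $V_1\setminus(A_1\cup B_1)=\emptyset$ and no generalized $2$-join arises (that particular configuration happens to be a coherent $W$-join, but this must be stated and verified). Closing the argument therefore requires a further case analysis --- generalized $2$-joins, coherent $W$-joins, or additional explicit $K_{3,3}$-minors --- none of which is supplied, so as written the proof is incomplete precisely at the step you yourself identify as the main obstacle.
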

 
 \begin{proof}{
Let $G$ be a  $K_{3,3}$-minor free line graph. The assertion is trivial for $\vert V(G)\vert \leq 3$. Now, let $ \vert V(G) \vert \geq 4$.   Let $\mathcal{T} =T_1, T_2, \ldots, T_r$ be a Wagner sequence of $G$. We use induction on  $r$.
  If $r=1$, then $G=T_1$ is either  $K_5$ or a planar graph. If $G$ is $K_5$, then the assertion is obvious. If $G$ is a planar graph, then by Theorem \ref{a2},  $G$ has a $2$-clique coloring,  since every line graph is claw-free.
  
 Now let $r \geq 2$. By the induction hypothesis $G_{r-1}$ and $T_r$  have  $2$-clique coloring.
If $G_r$ is $0$-sum or $1$-sum of $G_{r-1}$ and $T_r$, then the result is obvious.
Now, we suppose that $G_r$ is $2$-sum of $G_{r-1}$ and $T_r$ on edge $uv$. 
Note that if $uv$ is an edge cut, then $G$ can be considered as $1$-sum of two graphs. So, later on we assume that $uv$ is not an edge cut.
If $T_r$ is $K_5$ and  $\phi$ is a $2$-clique coloring of  $G_{r-1}$,  then we assign the colors $\phi(u)$ and $\phi(v)$ to vertices $u, v$ in $K_5$ and give two different colors $\{1,2\}$ to the other three vertices of $K_5$. 

If  $T_r$ is a planar graph, then we have four possibilities: \\
{\rm $(i)$} there exists $2$-clique colorings $\phi$ and  $\phi^{'}$ of $G_{r-1}$ and $T_r$, such that  $\phi(u)\neq \phi(v)$ and $\phi^{'}(u)\neq \phi^{'}(v)$; \\
 {\rm  $(ii)$} there exists $2$-clique colorings $\phi$ and  $\phi^{'}$ of $G_{r-1}$ and $T_r$,  such that $\phi(u)=\phi(v)$ and $\phi^{'}(u)=\phi^{'}(v)$; \\
 {\rm $(iii)$} in every $2$-clique colorings $\phi$ and  $\phi^{'}$ of $G_{r-1}$ and $T_r$,  $\phi(u)\neq \phi(v)$ and $\phi^{'}(u)=\phi^{'}(v)$; \\
 {\rm $(iv)$}  in every $2$-clique colorings $\phi$ and  $\phi^{'}$ of $G_{r-1}$ and $T_r$,    $\phi(u)= \phi(v)$ and $\phi^{'}(u)\neq \phi^{'}(v)$.
 
In the first two cases,    only by a color renaming, if it is necessary, we obtain a $2$-clique coloring for $G_{r}$. In the following, without loss of generality we consider the case $(iii)$ and show that it is impossible. 

The assumption $(iii)$,   concludes  that  vertex $u$ (and $v$)  in $T_{r}$ belongs to a maximal clique  $C_u$ (and $C_v$) such that  in every $2$-clique coloring of $T_r$,  $C_u\setminus \{u\}$ (and $C_v\setminus \{v\}$) is monochromatic. Hence, $u\notin C_v$ and $v\notin C_u$. This implies that, $u$ has a non-neighbor vertex in $C_v$, say $v'$,  also $v$ has a non-neighbor vertex in $C_u$, say $u'$.
Moreover,  assumption $(iii)$ implies that $uv$ is a maximal clique in $G_{r-1}$. Thus, 
  there exist vertex $u'' \in N_{G_{r-1}}(u)$  that  $u''\notin N_{G_{r-1}}(v)$ (or $v'' \in N_{G_{r-1}}(v)$  that  $v''\notin N_{G_{r-1}}(u)$). Hence,   edge  $uv$ among  edges $uu'$  and $uu''$  (or $vv'$ and  $vv''$) 
  is  a claw in $G_{r}$, that is a contradiction.

If  in the  operation $2$-sum,  the edge $uv$ is deleted, then by the following argument,  we could change the coloring of vertices in $T_r$ such that $\phi^{'}(u)\neq \phi^{'}(v)$,   that contradicts the assumption $(iii)$. Note that since $uv$ is not an edge cut in $G_{r-1}$ and $T_r$, there are shortest  $(u,v)$-paths $P: u_0=uu_1\dots u_s=v$ in $T_r/\{uv\}$ and $Q: v_0=vv_1\dots v_t=u$  in $G_{r-1}/\{uv\}$. 
  Since $G_r$  is claw-free, vertices $u$ and $v$ in $T_{r}$ and $G_{r-1}$ belongs to only one maximal clique.  
If  $d_{T_{r}}(u_i)=2$, $i=1, \dots, s-1$ and $d_{G_{r-1}}(v_j)=2$, $j=1, \dots, t-1$, then by $(iii)$, the lenght of $P$ is even and 
the lenght of $Q$ is odd. This implies  $G_r$ is an odd cycle and contradicts our assumption.
Thus, assume that $k\in \{0,1,\dots, s-1\}$ is the smallest indices that $d_{T_{r}}(u_k)\geq 3$ and  $w\in N_{T_{r}}(u_k)$. 
Since $G_r$ is claw free, we  must have $w\in N_{T_{r}}(u_{k+1})$. 
Let $C$ be a unique maximal clique consist of  $[u_k, u_{k+1}, w]$ (note that $N_{T_r}(u_k) \subseteq N_{T_r}(u_{k+1})$). If there exists a vertex in $C$ that its color is  
$\phi^{'}(u_k)$, then  we  swap the colors of vertices on  $(u,u_k)$-path in $P$. Thus, we will obtain a $2$-clique coloring of $T_r$ such that $u$ and $v$  are assigned different colors. This contradicts the assumption $(iii)$. 

Now assume that  the color of all vertices in $C$ are different from $\phi^{'}(u_k)$.  In this case, 
if  there exists a vertex in $C$, say $w'\neq u_k$, such that $C$ is a unique maximal clique contains $w'$, then we assign $\phi^{'}(u_k)$ to $w'$ and again swap the colors of vertices on  $(u,u_k)$-path in $P$. 
Otherwise, every vertex in $C$  belongs to a maximal clique other than $C$.  In this case, if there exists a vertex  $w'\in C$,  such that 
$w'\in C'$, where $C$ and $C'$ are maximal cliques in different blocks of $T_r$, then we swap the color of vertices in the component of $T_r/\{w'\}$ consist of $C'$, assign $\phi^{'}(u_k)$ to $w'$ and again swap the colors of vertices on  $(u,u_k)$-path in $P$.
Thus, we will obtain a $2$-clique coloring of $T_r$ such that $u$ and $v$  are assigned different colors. This contradicts the assumption $(iii)$.

The remaining case is that all vertices in $C$ belong to some other maximal cliques and all cliques are in one block in $T_r$.
In this case, let $l$ is the smallest indices that there exists  a path from $u_l$ to some vertices in $C/\{u_k,u_{k+1}\}$, we call this path $(w, u_l)$-path $P': ww_1\dots w_m=u_l$. 
Note that if there is not such a path, then we can consider graph $G$  as a $2$-sum of two graphs on edge $u_ku_{k+1}$, and we are  done.
If $m=1$, then since $P$ is a shortest path, we have $l=k+2$. Therefore, the induced  subgraph on vertices $\{u_{k-1}, u_k, u_{k+1}, u_{k+2}, u_{k+3}, w\}$ is  one of the nine forbidden structures in line graphs (see~\cite{beineke1968})  (note that if $k=0$ or $k=s-2$, then vertex $u_{k-1}=v_{t-1}$  or  $u_{k+3}=v_1$).  Hence, $m\geq 2$.  Also, $w_{m-1}$ is adjacent to $u_{l+1}$, since $T_r$ is claw free.
Now, by considering the  first internal  vertices  in  $P'$ and $(u_{k+1}, u_l)$-path in $P$ with degree greater than two, we do the similar above discussion in order to  change the  color of vertices $w$ or $u_{k+1}$ and subsequently change the color of $u$. 
Therefore, if we could not do that,  then we conclude that pattern of  colors in these paths are $a,b,a,b \dots$, where $a, b\in \{1,2\}$. Now, we have $\phi^{'}(w_{m-1})= \phi^{'}(u_{l+1})\neq  \phi^{'}(u_{l})$ or 
$\phi^{'}(w_{m-1})\neq \phi^{'}(u_{l+1})$. 
 In the former case, we swap the color of vertices in path $w_{m-1}w_{m-2} \dots w_1wu_k\dots u$. In the latter case, we swap the color of vertices in path $u_l u_{l-1}\dots u_{k+1}u_k u_{k-1}\dots u$. Thus, in both cases, we obtain a $2$-clique coloring for $T_r$ such that the vertices $u$ and $v$ recieve different colors and this contradicts the assumption $(iii)$. Therefore, the cases $(iii)$ and $(iv)$ are impossible and the proof is complete.
}\end{proof}

Now we show the $2$-clique colorability  of $K_{3,3}$-minor free   graphs in class  $F_4$.  First, we need the following theorem. 
\begin{theorem}~\rm\cite{ci3}\label{a8}
For any graph $G\neq C_5$ with $\alpha(G) \geq 2$, we have $\chi_c(G) \leq \alpha(G)$.
\end{theorem}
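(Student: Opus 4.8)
The plan is to argue by induction on $|V(G)|$, checking the small cases by hand. First I would reduce to $G$ connected: if $G=G_1\cup G_2$ is a disjoint union then the maximal cliques of $G$ are exactly those of $G_1$ together with those of $G_2$, so $\chi_c(G)=\max\{\chi_c(G_1),\chi_c(G_2)\}$, while $\alpha(G)=\alpha(G_1)+\alpha(G_2)$. It therefore suffices to clique-colour each component: a clique component needs at most $2\le\alpha(G)$ colours; a $C_5$ component forces $\alpha(G)\ge\alpha(C_5)+1=3=\chi_c(C_5)$; and a component that is neither a clique nor $C_5$ has independence number between $2$ and $\alpha(G)$, so the induction hypothesis applies. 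Henceforth $G$ is connected.

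Next I would reduce whenever $G$ has a vertex $v$ that is simplicial or satisfies $\deg_G(v)\le\alpha(G)-1$. Delete such a $v$ and clique-colour $G-v$ with colours from $\{1,\dots,\alpha(G)\}$; by induction this uses at most $\alpha(G-v)\le\alpha(G)$ colours, the degenerate sub-cases $\alpha(G-v)=1$ (then $G-v$ is a clique) and $G-v=C_5$ (then $\chi_c(C_5)=3$ and one checks that here $\alpha(G)=3$) being settled directly. Every maximal clique of $G$ missing $v$ is a maximal clique of $G-v$, hence already non-monochromatic; a maximal clique $Q\ni v$ is monochromatic only if the colouring is constant on $Q\setminus\{v\}\subseteq N_G(v)$, in which case its colour belongs to $c(N_G(v))$. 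If $v$ is simplicial then $c(N_G(v))$ is a single colour, and if $\deg_G(v)\le\alpha(G)-1$ then it has at most $\alpha(G)-1$ colours; in both cases a colour remains free for $v$, and we extend. So we may assume $\delta(G)\ge\alpha(G)$ and $G$ has no simplicial vertex.

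The case $\delta(G)\ge\alpha(G)$ is where the real work lies. Fix a maximum independent set $S=\{s_1,\dots,s_\alpha\}$; two basic facts are that every clique meets $S$ in at most one vertex and that, by maximality of $S$ and of the cliques involved, no $s_i$ is complete to a maximal clique of $G$. My approach would be to build a colouring whose $i$-th class lies inside $N[s_i]$, routing each vertex outside $S$ to the class of one of its $S$-neighbours, and then run a correction phase that breaks up any colour class still containing a maximal clique by swapping colours along a short path, in the spirit of the extension arguments in the proof of Proposition~\ref{a7}. Equivalently, in minimal-counterexample form, the inextendability of every $\alpha(G)$-colouring of every $G-v$ forces, for each such colouring, maximal cliques $Q_1,\dots,Q_\alpha$ through $v$ with $Q_i\setminus\{v\}$ monochromatic of colour $i$, and one recolours so as to merge two of these colours near $v$. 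The main obstacle is to show that the local recolouring can always be completed without creating a fresh monochromatic maximal clique elsewhere; this is precisely the point that fails for $C_5$, where every $2$-partition leaves a monochromatic edge of the cycle. So the heart of the proof is a structural analysis identifying $C_5$ as the only configuration in which the correction cannot be carried through, and ruling it out once $G\ne C_5$ and $\alpha(G)\ge 2$.
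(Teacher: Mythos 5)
This statement is quoted from Bacs\'o, Gravier, Gy\'arf\'as, Preissmann and Seb\H{o} \cite{ci3}; the paper itself gives no proof, so your proposal has to stand on its own, and as written it does not: there is a genuine gap exactly where the theorem's content lies. Your two reductions (to connected $G$, and the deletion of a vertex $v$ that is simplicial or has $\deg_G(v)\le\alpha(G)-1$) are essentially sound, although the sentence ``if $v$ is simplicial then $c(N_G(v))$ is a single colour'' is false as stated --- the correct point is that a simplicial $v$ lies in a unique maximal clique, so at most one colour is forbidden for $v$, and $\alpha(G)\ge 2$ leaves a free one. But after these reductions you arrive at the case of a connected graph with $\delta(G)\ge\alpha(G)$ and no simplicial vertex, and there you only describe a plan: colour class $i$ inside $N[s_i]$, then ``run a correction phase'' and show ``the local recolouring can always be completed,'' explicitly deferring ``the heart of the proof.'' That deferred step is the entire theorem. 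Note that this residual case is not small: it already contains the wheel $W_5$, the icosahedron, and all complements of triangle-free graphs, and nothing in your sketch explains why the swap-along-a-path correction terminates or why it cannot cascade into a new monochromatic maximal clique; the claim that $C_5$ is ``the only configuration in which the correction cannot be carried through'' is asserted, not argued.

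To make this a proof you would need to supply the missing structural lemma. The argument in \cite{ci3} is not a routine greedy-plus-local-repair induction of the kind you outline; it rests on a genuine structural analysis (a careful induction using dominating structures in connected graphs, which is precisely where the single exception $C_5$ is isolated). Your observation that a class contained in $N[s_i]$ can only trap a maximal clique through $s_i$ itself is a correct and useful starting point, but you must then prove that the sets $A_i\subseteq N(s_i)$ can be chosen so that for each $i$ some vertex of every maximal clique through $s_i$ inside $N[s_i]$ escapes to another class --- and that this choice can be made simultaneously for all $i$. Without that, the proposal is an outline of intent rather than a proof.
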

\begin{prop}\label{a9}
Every  $K_{3,3}$-minor free graph in $F_4$ is $2$-clique colorable.
\end{prop}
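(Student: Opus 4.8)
The plan is to use the second description of $F_4$ recorded above: every member of $F_4$ is either a line graph or has a singular vertex. So let $G\in F_4$ be $K_{3,3}$-minor free, realized as an induced subgraph of the fixed graph $H'$. If $G$ omits the extra vertex $z$ of $H'$ (the vertex not in $V(L(H))$), then $G$ is an induced subgraph of $L(H)$, hence a line graph, and Proposition~\ref{a7} finishes it — with the understanding, in force throughout this section, that odd cycles of order greater than three are excepted (among induced subgraphs of $H'$ the only such cycle is $C_5$). So assume $z\in V(G)$. Then $z$ is singular in $G$: in $H'$ the non-neighbours of $z$ are exactly the edges of $H$ incident with $h_1$, a clique, and its neighbours are exactly the edges of $H$ missing $h_1$, which induce a copy of $L(K_5)$; both facts pass to $G$. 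Writing $N:=N_G(z)$ and $Q:=V(G)\setminus(\{z\}\cup N)$, we get that $Q$ is a clique with $|Q|\le 6$, that $z$ is anticomplete to $Q$, and that $G[N]$ is an induced subgraph of $L(K_5)$ — in particular a $K_{3,3}$-minor free line graph with $\alpha(G[N])\le\alpha(L(K_5))=2$.

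I would now colour $z$ with colour $1$ and argue as follows. The maximal cliques of $G$ through $z$ are exactly the sets $\{z\}\cup K$ with $K$ a maximal clique of $G[N]$, while the maximal cliques of $G$ missing $z$ are exactly the maximal cliques of $G-z$ that are not contained in $N$. Hence it suffices to $2$-colour $V(G)\setminus\{z\}$ so that: (a) every maximal clique of $G-z$ not contained in $N$ receives both colours, and (b) no maximal clique of $G[N]$ is monochromatic in colour $1$. The idea is to colour $N$ first. If $G[N]\not\cong C_5$, take a $2$-clique coloring of $G[N]$ provided by Proposition~\ref{a7} (recall $G[N]$ is $K_{3,3}$-minor free, a line graph, and not an odd cycle of order greater than three); otherwise colour all of $N$ with colour $2$. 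Either way (b) holds, and in the first case so does the part of (a) concerning cliques that lie inside $N$. Then extend the colouring over the clique $Q$ to break the remaining maximal cliques of $G-z$ — those meeting $Q$. Each such clique $M$ has a non-empty part $M\cap Q$ whose colours are still free, and it needs to avoid monochromaticity either on $M\cap Q$ alone (if $M\subseteq Q$) or by disagreeing with the already-fixed colour(s) on $M\cap N$. Since $|Q|\le 6$ and $Q$ is itself a clique (so we may freely split it into two colour classes), this is a bounded case analysis; and a bounded analysis is all one can expect, as $F_4$ is the finite family of induced subgraphs of $H'$.

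The delicate point — the reason one cannot simply colour $z$ and quote Proposition~\ref{a7} for $G-z$ — is that $G-z$ need not itself be $2$-clique colorable: with $Q$ small and $N$ an induced path one already gets $G-z\cong C_5$. Likewise $G[N]$ may be $C_5$, i.e.\ $G$ contains a $5$-cycle together with a vertex complete to it, so condition (b) cannot always be met through a $2$-clique coloring of $G[N]$; but (b) forbids only colour-$1$ monochromatic cliques, which is why colouring all of $N$ with colour $2$ is allowed whenever it is consistent with (a). What reconciles (a) and (b) is exactly the singular structure: the clique $Q$ of non-neighbours of $z$ furnishes the slack required for (a), while $\alpha(G[N])\le 2$ keeps every clique through $z$ short, so that (b) is cheap. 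The bulk of the proof is the finite verification that these two demands can always be satisfied simultaneously over the induced subgraphs of $H'$.
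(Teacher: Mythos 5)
Your reduction is sound as far as it goes: splitting $F_4$ into line graphs (handled by Proposition~\ref{a7}) and graphs with a singular vertex is exactly the paper's first move, and your description of the maximal cliques of $G$ through and missing the singular vertex $z$, leading to conditions (a) and (b), is correct. The problem is that the entire difficulty of the proposition is concentrated in the step you do not carry out: the claim that the partial colouring of $\{z\}\cup N$ can always be extended over the clique $Q$ of non-neighbours of $z$ so that every maximal clique of $G-z$ meeting $Q$ becomes bichromatic. You assert this is ``a bounded case analysis'' and that ``the bulk of the proof is the finite verification,'' but you neither perform the verification nor give a structural reason it must succeed. Nothing in your argument excludes, for instance, a configuration in which several maximal cliques $M$ with $|M\cap Q|=1$ and $M\cap N$ monochromatic in colour $2$ force every vertex of $Q$ to colour $1$, while $Q$ is itself a maximal clique of $G-z$ and therefore needs both colours. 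Finiteness of $F_4$ makes the statement checkable in principle, but a proof must either do the check or argue why the constraints are always simultaneously satisfiable; as written, the proposal is a proof strategy with its only nontrivial step deferred.

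For comparison, the paper avoids this extension problem entirely in the singular-vertex case. It observes that the construction of $F_4$ forces $\alpha(G)\le 3$; the cases $\alpha(G)\le 2$ are dispatched by the bound $\chi_c(G)\le\alpha(G)$ of Theorem~\ref{a8}, and for $\alpha(G)=3$ it writes down one explicit colouring organized around the singular vertex $x$ and the unique vertex $t$ of a maximum independent set that is non-adjacent to $x$ (colour $x$ and the non-neighbours of $x$ other than $t$ with colour $1$, colour $t$ with colour $2$, and distribute colours on $N(x)$ according to the common neighbours of $x$ and $t$). If you want to keep your $Q$-versus-$N$ framework you must actually complete the extension argument over the induced subgraphs of $H'$; otherwise the $\alpha(G)\le 3$ route gives a shorter and fully explicit proof.
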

\begin{proof}{
Let $G$ be a graph in $F_4$. Since a graph in $F_4$ is a line graph or has a singular vertex, by Proposition~\ref{a7} it is sufficient to consider graphs in $F_4$ with singular vertex. So by the constraction of graphs in $F_4$, we have $\alpha(G) \leq 3$. For case $\alpha(G)=1$, the statement is obvious. If $\alpha(G)=2$, then by Theorem~\ref{a8}, $G$ is $2$-clique colorable, otherwise $\alpha(G)=3$. Let $x$ be a  singular vertex and $S=\lbrace  r, s, t\rbrace $ be  a maximum independent set in  $G$. Note that, $x\notin S$ and since non-neighbor vertices of $x$ induce  a clique, vertices $ r, s$ are adjacent to $x$ and $t$ is not adjacent to $x$.

 Now we propose a $2$-clique coloring $\phi$ for $G$ as follow:  let $\phi(x)=1$, $\phi(t)=2$ and   assign color $1$ to every non-neighbor vertex of $x$ except $t$. Now if $x$ and $t$ have more than one common neighbor,  then assign color $2$ to one of them and color $1$ to the other vertices, otherwise assign color $1$ to their common neighbor. Finally,  assign color $2$ to the other adjacent vertices to $x$. It  is easy to  see that, this assignment is a $2$-clique coloring of $G$.
}\end{proof}

Finally,  we show the $2$-clique colorability  of  $K_{3,3}$-minor free graphs  in  class $F_7$.
\begin{prop}\label{a10}
 Every  $K_{3,3}$-minor free graph in $F_7$ is  $2$-clique colorable.
\end{prop}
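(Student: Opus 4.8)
The plan is to split according to the independence number of $G$. First I would observe that $\alpha(G)\le 3$: since $G\in F_7$ the complement $\bar G$ is prismatic, so if $\{a,b,c,d\}$ were an independent set of $G$ then $\{a,b,c\}$ is a triangle of $\bar G$ and, $d$ lying outside it, prismaticity forces $d$ to have a unique $\bar G$-neighbour among $a,b,c$; translated back to $G$ this says $d$ is adjacent to exactly two of $a,b,c$, contradicting independence. Given $\alpha(G)\le 3$: if $\alpha(G)=1$ then $G$ is complete and the statement is trivial, and if $\alpha(G)=2$ then $\chi_c(G)\le 2$ by Theorem~\ref{a8} (the lone exception $C_5$ being an odd cycle of order greater than three). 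So the whole content lies in the case $\alpha(G)=3$.

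For $\alpha(G)=3$ I would fix a maximum independent set $S=\{a,b,c\}$ and use the antiprismatic property: every vertex outside $S$ is adjacent to exactly two members of $S$, partitioning $V(G)\setminus S$ into $A$ (adjacent to $b,c$ but not $a$), $B$ (adjacent to $a,c$ but not $b$) and $C$ (adjacent to $a,b$ but not $c$). Hence $\{b,c\}$ is complete to $A$, $\{a,c\}$ to $B$, $\{a,b\}$ to $C$, and $a$ is anticomplete to $A$, etc. A quick argument shows the non-edges inside each part form a matching: if $u,v\in A$ are non-adjacent, $\{a,u,v\}$ is independent, so every other vertex of $A$ — being non-adjacent to $a$ — must be adjacent to both $u$ and $v$, so $v$ is the only non-neighbour of $u$ in $A$. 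Now I would invoke $K_{3,3}$-minor freeness: as $\{b,c\}$ is already complete to $A$, any vertex outside $\{b,c,u,v,w\}$ adjacent to three vertices $u,v,w$ of $A$ yields a $K_{3,3}$; since a fifth vertex of $A$ is (by the matching structure) adjacent to all but at most one of the others, this forces $|A|\le 4$, and symmetrically $|B|,|C|\le 4$, so $|V(G)|\le 15$; a little more of the same analysis pins down the adjacencies between the parts.

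It remains to $2$-clique colour the (now finitely many) surviving graphs, and here the useful fact is a classification of the maximal cliques $Q$ of size $\ge 2$. Such a $Q$ cannot be contained in one part or in the union of two parts, because the remaining vertex of $S$ is complete to those parts and would extend $Q$; hence either $Q$ meets $S$ in exactly one vertex $s$, and then $Q$ lies in $\{s\}$ together with the two parts to which $s$ is complete, or $Q$ is disjoint from $S$ and meets all three of $A,B,C$. From this I would read off an explicit $2$-clique colouring built from the parts — typically one colour on $A\cup C$ and the other on $B$, with the colours of $a,b,c$ chosen to break the maximal cliques meeting $S$ — with ad hoc adjustments in the degenerate cases where a part is empty (there the colour classes may need to be carved out of $G[B]$ or $G[C]$ rather than the whole part). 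A cleaner alternative, if it goes through, is to prove the surviving graphs have maximum degree at most seven — note that in a claw-free graph, and antiprismatic graphs are claw-free, every neighbourhood has independence number at most two, and $G[N(v)]$ is moreover $K_{2,3}$-minor free — and then finish via Theorem~\ref{af}.

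The hard part is precisely the $\alpha(G)=3$ case: extracting clean bounds on $|A|,|B|,|C|$ and on the inter-part structure from $K_{3,3}$-minor freeness without a sprawling case analysis, and then either exhibiting the colourings uniformly or establishing a maximum-degree bound. The maximal-clique classification and the matching structure inside the parts are what I expect to keep this tractable; and Corollary~\ref{cf} together with Theorems~\ref{aa} and~\ref{a3} may be used beforehand to assume $G$ admits none of the listed decompositions, possibly shortening the structural step.
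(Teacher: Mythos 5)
Your structural reduction is essentially the paper's: the paper also observes $\alpha(G)\le 2$ when $\bar G$ is triangle-free (disposing of that case by Theorem~\ref{a8}), and otherwise partitions $V(G)$ minus a triangle $\{u,v,w\}$ of $\bar G$ into the three sets of vertices with a prescribed non-neighbour in the triangle — exactly your $A,B,C$. Your observations that the non-edges inside each part form a matching and that $\{b,c\}$ complete to $A$ plus a third common neighbour of three vertices of $A$ yields a $K_{3,3}$ are correct and in the right spirit (the paper pushes the same idea further to show $|S_i|\ge 3$ is outright impossible, leaving $|S_i|\le 2$).

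The genuine gap is the endgame: you never actually produce the $2$-clique colourings, and the colouring you sketch does not work as stated. A maximal clique $Q$ with $Q\cap S=\{a\}$ lies in $\{a\}\cup B\cup C$, but nothing forces it to meet both $B$ and $C$ — no vertex of $S$ is adjacent to $a$, so a clique $\{a\}\cup Q'$ with $Q'\subseteq B$ can already be maximal. If such a clique coexists with a maximal clique $\{a\}\cup Q''$, $Q''\subseteq C$, then colouring $A\cup C$ with one colour and $B$ with the other leaves no admissible colour for $a$; these are precisely the ``ad hoc adjustments'' you defer, and they are where the difficulty lives. The fallback via Theorem~\ref{af} is likewise unverified: your bounds only give $|A|,|B|,|C|\le 4$, and a vertex of $A$ may be adjacent to $b$, $c$, three vertices of $A$ and several vertices of $B\cup C$, so no $\Delta\le 7$ bound follows from what you have shown. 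The paper closes this gap differently: it quotes four lemmas of Liang et al.\ that dispose of the configurations where some $|S_i|\le 1$, where some $S_i\cup S_j$ is independent, or where a suitable edge joins isolated vertices of $\bar G[S_i]$ and $\bar G[S_j]$; it then shows $|S_i|\ge 3$ forces a $K_{3,3}$-minor; and in the surviving $|S_1|=|S_2|=|S_3|=2$ case it pins the graph down to two explicit claw-free planar graphs and invokes Theorem~\ref{a2}. To complete your argument you would need either to import those lemmas or to carry out the finite case check you allude to.
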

\begin{proof}{
Let $G$ be a graph in $F_7$. Since $G$ is an antiprismatic, $\bar{G}$ is  prismatic. If $\bar{G}$ has no triangle, then $\alpha(G)=2$, and  by Theorem~\ref{a8},  is $2$-clique colorable. Now let $T= [vuw] $ be a triangle in $\bar{G}$, and $S_1 = N_{\bar{G}}(v)\setminus \lbrace u,w\rbrace $, $S_2 = N_{\bar{G}}(u)\setminus \lbrace v,w\rbrace $ and $S_3 = N_{\bar{G}}(w)\setminus \lbrace u, v\rbrace $ be a partition of vertices $V(G) -\lbrace v, u, w\rbrace $.

Liang et al. in~\cite{cims} prove that if
\begin{itemize}
\item[(i)]  $\vert S_i\vert = 0$ for some $i = 1, 2, 3$, then $G$ has a $2$-clique coloring.
\item[(ii)] $\vert S_i\vert = 1$ for some $i = 1, 2, 3$, then $G$ has a $2$-clique coloring.
\item[(iii)] there is an edge $xy$ in $\bar{G}$ such that for $i\neq j\in \lbrace 1,2,3\rbrace $, $x$ is an isolated vertex in $\bar{G}[S_i]$ and $y$ is an isolated vertex in $\bar{G}[S_j]$, then there exists a $2$-clique coloring of $G$.
\item[(iv)] there exist $i\neq j\in \lbrace 1,2,3\rbrace $ such that $S_i\cup S_j$ is an independent set in $\bar{G}$, then $G$ has a $2$-clique coloring.
\end{itemize}
In the following for the remaining cases,  we provide a $2$-clique coloring for $G$ or we show that $G$ is $K_{3,3}$-minor that is a contradiction. Let $S_1 = \lbrace v_1, v_2\rbrace $ and  $S_2 = \lbrace u_1, u_2\rbrace $ and  $S_3 = \lbrace w_1, w_2\rbrace $. There are $i\neq j$, $i,j\in \lbrace 1,2,3\rbrace $, say $i=1$, $j=2$, such that $v_1$ is adjacent to $v_2$  in $\bar{G}$ and  $u_1$ is adjacent to $u_2$ in $\bar{G}$, otherwise by (iii) or (iv), we have $\chi_c(G)\leq 2$. Hence,  we have triangles $[uu_1u_2]$ and $[vv_1v_2]$ in $\bar{G}$. Since $\bar{G}$ is a prismatic $v_1,v_2,w_1,w_2$ have a unique neighbor in $[uu_1u_2]$ and $u_1, u_2,w_1, w_2$ have a unique neighbor in $[vv_1v_2]$. Thus, $\lbrace u_1, u_2,v_1,v_2\rbrace $ induces a cycle in $\bar{G}$ because, otherwise, for instance if $u_1$ and $u_2$ both are adjacent to $v_1$, then there exist two neighbors for $u$ in triangle $[u_1u_2v_1]$. Without loss of generality, assume that $u_1v_1$ and $u_2v_2$ are edges in $\bar{G}$. That means,  $u_1v_2$ and $u_2v_1$ are edges in $G$.

 Now each two vertices $w_1$ and $w_2$ have unique neighbor in $[u u_1 u_2]$ and $[v v_1 v_2]$. If both vertices $w_1$ and $w_2$ are adjacent to $u_1$ (or $u_2$) and $v_1$ (or $v_2$) in $\bar{G}$, then there exists two neighbors for $w_2$ in triangle $[v_1u_1w_1]$ (or $[v_2u_2w_1]$) that  contradicts $\bar{G}$ is prismatic. If vertices $w_1$ and $w_2$ are  both  adjacent to $u_1$ (or $u_2$) and $v_2$ (or $v_1$) in $\bar{G}$,  then  $G$ has a $K_{3,3}$-minor, on vertices $\lbrace w, w_1, w_2 ; u, v, v_1\rbrace $ (or $\lbrace w, w_1, w_2; u, v, v_2\rbrace $).  Note that if  $w_1$ is adjacent to $w_2$ in $\bar{G}$, then  we have triangle $[ww_1w_2]$ and since $\bar{G}$ is  prismatic,  vertices $w_1$ and $w_2$ cannot be both adjacent to one vertex of $\lbrace v_1, v_2\rbrace $ or $\lbrace u_1, u_2\rbrace $.  If $w_1$ is adjacent to $u_1$ (or $u_2$)  and $v_1$ (or $v_2$) and $w_2$ is adjacent to $u_2$ (or $u_1$)  and $v_2$ (or $v_2$)  in $\bar{G}$, then $G$ has  a $K_{3,3}$-minor, on vertices $\lbrace w, w_1, w_2; u, v, v_2\rbrace $ (or $\lbrace w, w_1, w_2; u, v, v_1\rbrace $). Hence,  all cases above,   contradicts that $G$  is $K_{3,3}$-minor free or  $\bar{G}$ is  prismatic. Thus, it is enough to consider the two following remaining cases.
 \begin{itemize}
 \item $w_1$ is adjacent to $u_1$ and $v_2$, and $w_2$ is adjacent to $u_2$ and $v_1$ in $\bar{G}$ (Figure (\ref{G2}) shows graph $G$).
  \item $w_1$ is adjacent to $u_2$  and $v_1$, and $w_2$ is adjacent to $u_1$ and $v_2$  in $\bar{G}$ (Figure (\ref{G1}) shows graph $G$).
 \end{itemize}

 In both above cases $G$ is a  claw free planar graph and  by Theorem~\ref{a2} is $2$-clique colorable (in Figure 1, the dashed lines show the edges that may exist or not exist in $G$). 

 \begin{figure}[ht] 
 \centering \subfloat[ ]
  { \includegraphics[scale=.26]{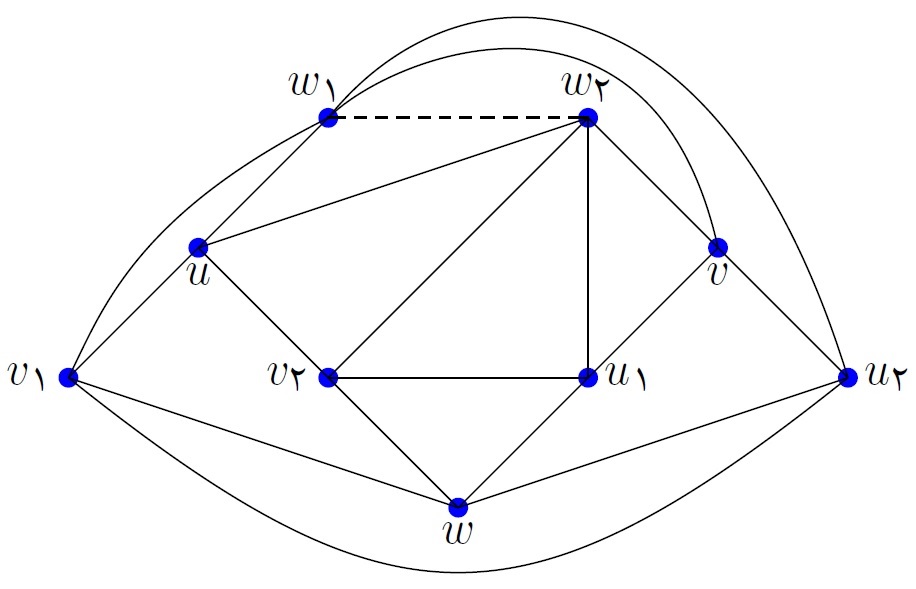}\label{G2}}
  \qquad
  \subfloat[ ]
 {\includegraphics[scale=.26]{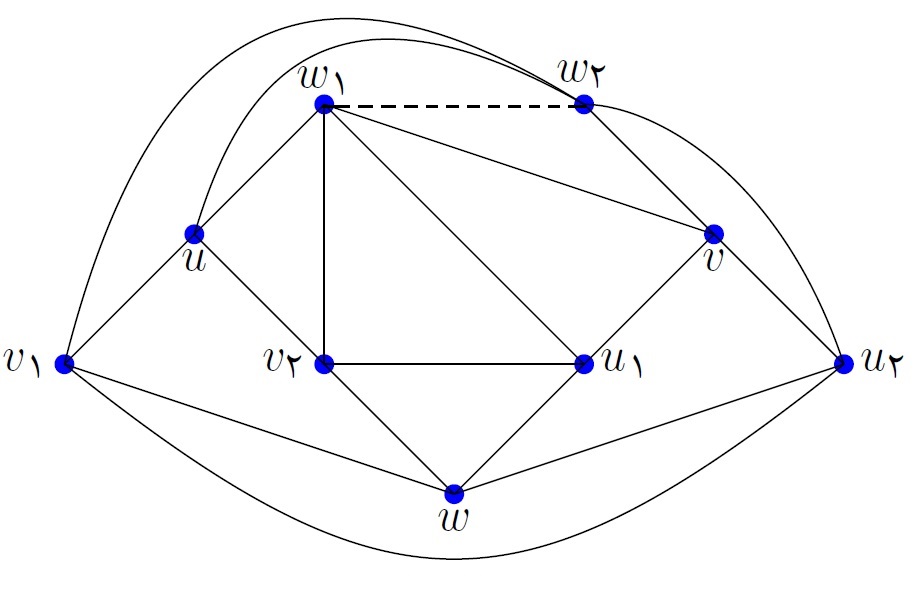}\label{G1}}
   \caption{Two $K_{3,3}$-minor free graphs.}\label{fig} 
   \end{figure} 

Finally let $\vert S_i\vert \geq 3$ for some $i = 1, 2, 3$,  say    $\vert S_1\vert \geq 2$,  $\vert S_2\vert \geq 2$  and $S_3 = \lbrace w_1, w_2, w_3\rbrace $.  Since such graphs contains the graphs with $|S_i| \leq 2$, $i=1, 2, 3$ as subgraph, we only need to consider graphs that contains one of the two graphs shown in Figure~1. 
By case (iv) there are $i\neq j \in \lbrace 1, 2, 3\rbrace $ such that $\bar{G}[S_i]$ and $\bar{G}[S_j]$ both are not independent. Liang et al. in~\cite{cims} show that $\bar{G}[S_i]$, $i\in \lbrace1, 2, 3\rbrace $, is not path and triangle. So we need to consider the case that 
 $[u u_1 u_2]$ and  $[v v_1 v_2]$    are triangles in $\bar{G}$, and $v_1w_3\in E(\bar{G})$ or $v_2w_3\in E(\bar{G})$. This implies    $G$ has a $K_{3,3}$-minor, on vertex set 
$\lbrace w, w_1, w_2; u, v, v_2\rbrace $ or  $\lbrace w, w_1, w_2; u, v, v_1\rbrace $, respectively. Note that, when
 $[u u_1 u_2]$ and  $[w w_1 w_2]$  are triangles in  $\bar{G}$,  is similar.
Therefore,  when $\vert S_i\vert \geq 3$ for some $i = 1, 2, 3$, $G$ is  a $K_{3,3}$-minor, that is a contradiction.
}\end{proof}

By Theorem~\ref{aa}, Corollary~\ref{cf} and Proposition~\ref{a7}, \ref{a9}, \ref{a10}, the main result in this section is proved.

\begin{theorem}\label{m}
If $G$ is  claw-free and $K_{3,3}$-minor free graph  except an odd cycle of order greater than three, then $G$ is $2$-clique colorable.
\end{theorem}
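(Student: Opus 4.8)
The plan is to run the Chudnovsky--Seymour structure theorem (Theorem~\ref{a3}) on $G$ and to dispatch every case it produces using the results already assembled above. First I would reduce to the case that $G$ is connected: the maximal cliques of a graph are exactly the maximal cliques of its components, so a $2$-clique coloring of $G$ is nothing but a choice of a $2$-clique coloring on each component; hence it suffices to treat connected $G$, and in particular I may assume $G$ does not admit a $0$-join.

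Now apply Theorem~\ref{a3} to this connected, claw-free, $K_{3,3}$-minor free graph $G$, which is not an odd cycle of order greater than three. If $G$ admits twins, a non-dominating $W$-join, a $1$-join, or a generalized $2$-join, then Corollary~\ref{cf} (whose hypotheses are exactly met here) gives $\chi_c(G)\le 2$. If $G$ admits a hex-join, then Theorem~\ref{aa} gives $\chi_c(G)\le 2$. A $0$-join cannot occur. In the only remaining situation, $G\in F_0\cup F_1\cup\cdots\cup F_7$. If $G\in F_1\cup F_2\cup F_3\cup F_5\cup F_6$, Theorem~\ref{aa} again yields $\chi_c(G)\le 2$. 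The only possibilities left are $G\in F_0$, $G\in F_4$, and $G\in F_7$, and these are precisely the cases settled by Propositions~\ref{a7}, \ref{a9}, and \ref{a10}, respectively: each $K_{3,3}$-minor free member of these classes (not an odd cycle of order greater than three) is $2$-clique colorable. Since $\chi_c(G)\le 2$ in every branch, the theorem follows.

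The substance of the argument lies entirely in the three propositions that feed it, so for the theorem proper the only thing to get right is the bookkeeping: one must check that $F_0\cup F_4\cup F_7$ is exactly the residue of the structure theorem not already covered by Theorem~\ref{aa} and Corollary~\ref{cf}, and that the ``odd cycle of order greater than three'' exception is carried consistently through every branch (it is genuinely needed only for the line-graph class $F_0$ and in the hypotheses of Theorem~\ref{aa} and Corollary~\ref{cf}, since an odd cycle of length at least five is a line graph). The real obstacle is not this assembly but Proposition~\ref{a7}: there a Wagner sequence of $G$ must be processed $2$-sum by $2$-sum, and when the colorings of the two sides disagree on whether the glued edge is monochromatic one is forced into a recoloring argument along shortest $(u,v)$-paths that relies on the list of forbidden induced subgraphs of line graphs; Propositions~\ref{a9} and \ref{a10}, resting on the bound $\alpha(G)\le 3$ via Theorem~\ref{a8} and on a case analysis of prismatic complements, are comparatively routine.
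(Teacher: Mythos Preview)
Your proposal is correct and follows essentially the same approach as the paper: the paper's proof of Theorem~\ref{m} is precisely this assembly, stated in one line as ``By Theorem~\ref{aa}, Corollary~\ref{cf} and Proposition~\ref{a7}, \ref{a9}, \ref{a10}, the main result in this section is proved,'' after having already observed (just before Proposition~\ref{a7}) that the only classes not covered by Theorem~\ref{aa} and Corollary~\ref{cf} are $F_0$, $F_4$, and $F_7$. Your explicit reduction to the connected case to dispose of the $0$-join, and your remarks on where the real work lies, are accurate elaborations of this same bookkeeping.
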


\end{document}